\newtheorem{thm}{Theorem}
\newtheorem{prop}[thm]{Proposition}
\newtheorem{lem}[thm]{Lemma}
\newtheorem{conj}[thm]{Conjecture}
\newcommand{\SL}{\mathrm{SL}_2(\mathbb{C})}
\newcommand{\db}{\mathbin{/\mkern-6mu/}}
\newcommand{\id}{\ensuremath{\,\mathrm{Id}}}
\newcommand{\Id}{\ensuremath{\,\mathrm{Id}}}
\newcommand{\RR}{\mathcal X_{\SL}}
\newcommand{\SSS}{\mathcal S_A}
\newcommand{\End}{\mathrm{End}}
\newcommand{\Tr}{\mathrm{Tr}\,}
\theoremstyle{remark}
\title[Representations of the skein algebra ]{Representations of the Kauffman skein algebra \\ of small surfaces}
\author{Nurdin Takenov}
\email{greanvert@gmail.com}
\date{\today}
\thanks{This research was partially supported by grants DMS-1105402 and DMS-1406559  from the U.S. National Science Foundation.}
\begin{document}

\begin{abstract} We prove a uniqueness result for finite-dimensional representations of the Kauffman skein algebra $\SSS(S)$ of a surface $S$, when $A$ is a root of unity and when the surface $S$ is a sphere with at most four punctures or a torus with at most one puncture. We show that, if two irreducible representations of $\SSS(S)$ have the same classical shadow and the same puncture invariants, and if this classical shadow is sufficiently generic in the character variety $\RR(S)$, then the two representations are isomorphic. 
\end{abstract}

\maketitle

Let $S$ be an oriented surface (without boundary) with finite topological type. The \emph{Kauffman skein algebra} $\mathcal S_A(S)$ of $S$ is a certain quantization of the character variety
$$
\RR(S) = \{ \text{group homomorphisms }r\colon  \pi_1(S) \to \SL \}\db \SL
$$
depending on a parameter $A=\mathrm e^{\pi\mathrm i \hbar}\in \mathbb C-\{0\}$. The elements of  $\SSS(S)$ are represented by linear combinations of framed links in the thickened surface $S\times [0,1]$, considered modulo certain relations;  see \cite{TuraevPoisson, BFK, PrzSik} and \S \ref{sect:KauffmanAlgebra}. 

In \cite{BonWon1, BonWon2, BonWon3}, Bonahon and Wong consider finite-dimensional representations of the skein algebra $\SSS(S)$ when $A$ is a root of unit,  construct invariants of irreducible representations, and show that any set of invariant is realized. The purpose of the current paper is to provide a uniqueness component to their results when the surface $S$ is small. 

The precise result of \cite{BonWon1} is the following. The setup requires $A^2$ to be an $N$--root of unity with $N$ odd, and we will restrict  attention to the case where $A^N=-1$ as results are easier to state; because $N$ is odd, this case is equivalent to requiring that $A$ be an $N$--root of $-1$. The case $A^N=+1$ can be deduced from the case $A^N=-1$ by using spin structures on the surface; see \cite{Barrett, BonWon1}. 

By definition of the geometric invariant theory quotient involved in the definition of $\RR(S)$, two homomorphisms $r$, $r'\colon \pi_1(S) \to \SL$ define the same element of $\RR(S)$ if and only if they associate the same trace $\Tr r(\gamma) = \Tr r'(\gamma)$ to each element $\gamma \in \pi_1(S)$.

\begin{thm}[\cite{BonWon1}]
\label{thm:InvariantsExist}
Suppose that $A$ is a primitive $N$-root of $-1$ with $N$ odd, and let $\rho \colon \SSS(S) \to \End(V)$ be an irreducible finite-dimensional representation of the Kauffman skein algebra. Let $T_N(x)$ be the $N$-th normalized Chebyshev polynomial  of the first kind, defined by the trigonometric equality that $\cos N\theta = \frac12 T_N(2\cos \theta)$. 

\begin{enumerate}
\item  There exists a unique character $r \in \RR(S)$ such that
$$
T_N \bigl( \rho ([K]) \bigr) =- \bigl( \Tr\, r(K) \bigr) \Id_V
$$
in $\End(V)$ for every framed knot $K\subset S \times [0,1]$ whose projection to $S$ has no crossing and whose framing is vertical. 

\item Let $P_k$ be a small simple loop going around the $k$-th puncture of $S$, and consider it as a knot in $S\times[0,1]$ with vertical framing. Then there exists  a number $p_k\in \mathbb C$ such that
$
\rho\bigl([P_k]\bigr) = p_k \Id_V
$.

\item The number $p_k$ of {\upshape (2)} is related to the character  $r \in \RR(S)$ of 
 {\upshape (1)} by the property that  $T_N (p_k) = -\Tr\, r(P_k)$.
\end{enumerate}
\end{thm}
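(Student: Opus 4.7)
The key ingredient I would establish is a threading property for the specific values of $A$ considered here: when $A$ is a primitive $N$-root of $-1$ with $N$ odd, the element $T_N([K]) \in \SSS(S)$ associated to a knot $K$ with no crossings and vertical framing is central in the skein algebra. More strongly, the assignment $K \mapsto T_N([K])$ should extend to an algebra homomorphism $\mathrm{Ch}\colon \mathcal S_{-1}(S) \to \SSS(S)$ whose image lies in the center, where $\mathcal S_{-1}(S)$ denotes the skein algebra evaluated at the classical parameter $A=-1$. Under the classical identification $\mathcal S_{-1}(S) \cong \mathbb C[\RR(S)]$ sending a knot $[K]$ to the trace function $r \mapsto -\Tr\,r(K)$, this threading map produces a central subalgebra indexed by the character variety.

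With this tool in hand, part (1) becomes a consequence of Schur's lemma. Since $\rho$ is irreducible, each central element, in particular each $T_N([K])$, is sent to a scalar operator; write $\rho\bigl(T_N([K])\bigr) = \lambda_K \Id_V$. The assignment $[K]\mapsto \lambda_K$ is multiplicative and linear on $\mathcal S_{-1}(S)$, hence defines an algebra homomorphism $\mathcal S_{-1}(S) \to \mathbb C$, equivalently a point $r \in \RR(S)$. Reading this back through the sign convention above yields $T_N(\rho([K])) = -(\Tr\,r(K))\,\Id_V$ for all simple knots $K$, and uniqueness of $r$ follows because the trace functions $r\mapsto \Tr\,r(K)$ for simple loops already generate $\mathbb C[\RR(S)]$ and hence separate points.

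For part (2), I would note that a small loop $P_k$ around a puncture can be isotoped inside a punctured collar of the puncture, which is disjoint from any other framed link in $S\times[0,1]$ after a small vertical translation. Consequently $[P_k]$ commutes with every element of $\SSS(S)$ and is itself central, so Schur's lemma gives $\rho([P_k]) = p_k\,\Id_V$ for some $p_k\in\mathbb C$. Part (3) is then immediate: applying part (1) to the knot $P_k$ and evaluating $T_N$ on the scalar operator $p_k\,\Id_V$ gives $T_N(p_k)\,\Id_V = T_N\bigl(\rho([P_k])\bigr) = -(\Tr\,r(P_k))\,\Id_V$, whence $T_N(p_k) = -\Tr\,r(P_k)$.

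The main obstacle is the centrality statement underlying the threading map $\mathrm{Ch}$. Establishing it requires a delicate skein-theoretic computation using the Kauffman bracket relations at a primitive $N$-root of $-1$: one must verify that pushing an arbitrary transverse arc past a curve threaded by $T_N$ leaves it invariant, using the special Chebyshev-type identities that hold only at these specific values of $A$. This is the content of the Chebyshev-Frobenius theorem of Bonahon and Wong, which I would invoke rather than attempt to reprove from scratch; the remainder of the argument is a formal consequence of Schur's lemma and the classical identification of $\mathcal S_{-1}(S)$ with $\mathbb C[\RR(S)]$.
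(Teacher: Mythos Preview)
The paper does not prove this theorem at all; it is stated as a result of Bonahon--Wong with the citation \cite{BonWon1} attached to the theorem header, and no argument is given in the body of the paper. Your sketch is a faithful outline of the Bonahon--Wong proof: the Chebyshev--Frobenius homomorphism $\mathrm{Ch}\colon \mathcal S_{-1}(S)\to Z(\SSS(S))$, Schur's lemma applied to the resulting central elements, and Bullock's identification of $\mathcal S_{-1}(S)$ with $\mathbb C[\RR(S)]$. So your proposal agrees with what the paper invokes by reference, and correctly identifies the hard step (centrality of the threaded Chebyshev elements) as the content one must import from \cite{BonWon1}.
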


The character  $r \in \RR(S)$ is the \emph{classical shadow} of the representation $\rho$, while the numbers $p_k$ are its \emph{puncture invariants}. Bonahon and Wong also show in \cite{BonWon2, BonWon3} that every character $r\in \RR(S)$ and set of numbers $p_k \in \mathbb C$ associated to the punctures of $S$ that satisfy the relation (3) of Theorem~\ref{thm:InvariantsExist} can be realized as the set of invariants of an irreducible representation of $\SSS(S)$. 

We address the following conjecture which asserts that, generically, an irreducible representation of $\SSS(S)$ should be determined by its classical shadow and its puncture invariants. 

\begin{conj}[Generic Uniqueness Conjecture]
\label{conj:MainConjecture}
There exists a Zarisky dense open subset $\mathcal U \subset \RR(S)$ such that, if two irreducible representations $\rho$, $\rho '\colon \SSS(S) \to \End(V)$ have the same classical shadow and the same puncture invariants, and if the classical shadow  $r $ belongs to this dense subset $\mathcal U \subset \RR(S)$, then the representations $\rho$ and $\rho'$ are isomorphic. 
\end{conj}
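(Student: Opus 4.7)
The plan is to reduce the Generic Uniqueness Conjecture to a statement about the Azumaya locus of $\SSS(S)$ viewed as a module-finite algebra over a natural central subalgebra. For a small surface $S$, the skein algebra admits an explicit presentation in a small number of generators modulo explicit polynomial relations, which has been worked out in the literature for both the sphere with $\leq 4$ punctures and the torus with $\leq 1$ puncture. Combined with parts (1)--(2) of Theorem~\ref{thm:InvariantsExist}, this lets one identify a central subalgebra $Z_0 \subset \SSS(S)$ generated by the $T_N([K])$ of a finite generating set of loops together with the peripheral knots $[P_k]$, over which $\SSS(S)$ is finitely generated as a module. The classical shadow $r\in \RR(S)$ and the puncture invariants $(p_k)$ then jointly specify a unique maximal ideal $\mathfrak m\subset Z_0$, and any irreducible representation $\rho$ of $\SSS(S)$ with these invariants factors through the finite-dimensional quotient $A_{\mathfrak m}:=\SSS(S)/\mathfrak m\,\SSS(S)$.

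The main step is to show that, for $r$ in a Zariski dense open subset $\mathcal U \subset \RR(S)$, the quotient $A_{\mathfrak m}$ is isomorphic to a matrix algebra $M_d(\mathbb C)$, with $d$ the PI-degree of $\SSS(S)$; once this is established, Artin--Wedderburn forces any two irreducible representations of $\SSS(S)$ with the given invariants to be isomorphic. I would approach this in three stages. First, verify that $\SSS(S)$ is a prime ring, either from its explicit presentation or by embedding it into a quantum torus along Bonahon--Wong's quantum trace. Second, compute the PI-degree $d$ by locating $Z_0$ inside $\SSS(S)$ and determining the generic rank of $\SSS(S)$ over $Z_0$ directly from the presentation. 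Third, exhibit a single Azumaya point by producing, for some $r_0\in \RR(S)$, one irreducible representation of $\SSS(S)$ of dimension exactly $d$ out of the existence results of \cite{BonWon2, BonWon3}. Since the Azumaya locus of a prime, affine, module-finite PI algebra is a Zariski open subset of the spectrum of its center, a single Azumaya point then propagates to a Zariski dense open $\mathcal U\subset \RR(S)$.

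I expect the principal obstacle to be the third stage: precisely matching the dimension of one Bonahon--Wong representation against the PI-degree computed in the second stage, and verifying that equality forces the quotient at that point to be simple. For the one-punctured torus, $d$ is expected to equal $N$, and for the four-punctured sphere a small power of $N$; in both cases the matching should be tractable by direct computation using the known presentations of $\SSS(S)$. Primeness is comparatively routine once the quantum trace is available, and the final passage from a single Azumaya point to a Zariski dense open subset of $\RR(S)$ is automatic from general PI theory, after checking that the projection $\mathrm{Spec}(Z_0)\to \RR(S)$ obtained from the classical shadow is surjective with Zariski open image over the compatible locus of puncture invariants.
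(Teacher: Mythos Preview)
Your proposal is a viable strategy but follows a genuinely different route from the paper. The paper never invokes PI theory or Azumaya loci; instead, for each small surface it works directly with the Bullock--Przytycki presentation, introduces explicit ``up'' and ``down'' operators $\mathcal U_k$, $\mathcal D_k$ that cyclically permute the eigenspaces of $\rho(X_3)$, and reconstructs the representation matrix entries by hand from the classical shadow and puncture invariants. This yields explicit Zariski-open conditions on $r$ (e.g.\ $\Tr r(X_3)\neq\pm 2$ together with a nonvanishing polynomial in the traces) and explicit formulas for $\rho(X_i)$, at the cost of case-by-case computations that become substantially heavier for $S_{0,4}$ than for $S_{1,1}$. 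Your approach is more conceptual and would transport to larger surfaces, but it produces neither an explicit $\mathcal U$ nor explicit representation formulas.

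One point in your outline needs to be made explicit. Your argument that $A_{\mathfrak m}=\SSS(S)/\mathfrak m\,\SSS(S)$ is generically a full matrix algebra presupposes that $Z_0$ coincides with the \emph{full} center $Z$ of $\SSS(S)$. If $Z_0\subsetneq Z$, then a generic fiber over $\mathrm{MaxSpec}(Z_0)$ is only semisimple with $[Z:Z_0]$ simple summands, and two irreducibles with the same classical shadow and puncture invariants need not be isomorphic; in particular, exhibiting a single irreducible of dimension equal to the PI-degree does not by itself force $A_{\mathfrak m}$ to be simple. For the small surfaces at hand, the equality $Z_0=Z$ can be checked directly from the explicit presentations (and is now known in general), but it should appear as a separate verification in your plan rather than being absorbed into the ``single Azumaya point'' step.
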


Our main result is the following. 

\begin{thm}
\label{thm:MainThmIntro}
The Generic Uniqueness Conjecture~{\upshape \ref{conj:MainConjecture}} holds when the surface $S$ is a sphere with at most $4$ punctures, or a torus with $0$ or $1$ puncture. 
\end{thm}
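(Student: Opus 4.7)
The plan is to exploit the explicit finite presentations of $\SSS(S)$ available for the surfaces in question (Bullock--Przytycki--Sikora for punctured spheres, Frohman--Gelca--Bullock for the punctured tori), reducing the uniqueness statement to showing that a suitable finite-dimensional quotient of $\SSS(S)$ is a matrix algebra. For $S$ a sphere with at most three punctures the algebra $\SSS(S)$ is commutative, irreducible representations are one-dimensional, and the statement follows at once from Theorem~\ref{thm:InvariantsExist}. The substantive cases are the four-punctured sphere --- where $\SSS(S)$ has an Askey--Wilson-type presentation in three generators $x,y,z$ corresponding to the three ways of pairing the punctures, together with four central puncture loops and one cubic relation --- and the torus with zero or one puncture, where $\SSS(S)$ is generated by three simple closed curves $x,y,z$ of pairwise intersection one, satisfying Frohman--Gelca $q$-commutation relations.

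Given a shadow $r$ and puncture invariants $\{p_k\}$ as in Theorem~\ref{thm:InvariantsExist}, let $\mathcal I_{r,\{p_k\}} \subset \SSS(S)$ be the two-sided ideal generated by the central elements
$$
T_N\bigl([K_j]\bigr) + \bigl(\Tr\, r(K_j)\bigr)\cdot 1 \qquad \text{and}\qquad [P_k] - p_k \cdot 1,
$$
where $K_j$ ranges over the finite list of simple-closed-curve generators and $P_k$ over the punctures. Centrality of the $T_N\bigl([K_j]\bigr)$ follows from the Chebyshev--Frobenius homomorphism of \cite{BonWon1}, and centrality of $[P_k]$ is clear since puncture loops can be isotoped away from any given skein. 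By construction, any irreducible representation of $\SSS(S)$ with the given invariants factors through $\overline{\SSS}(S) := \SSS(S)/\mathcal I_{r,\{p_k\}}$. The core step is a PBW-style dimension count showing $\dim_{\mathbb C}\overline{\SSS}(S) \le N^2$. Combined with the realization result of \cite{BonWon2,BonWon3}, which supplies an irreducible representation $\rho_0 : \SSS(S) \to \End(V_0)$ with the given invariants, the Jacobson density theorem yields a surjection $\overline{\SSS}(S) \twoheadrightarrow \End(V_0)$, forcing $\dim V_0 = N$ and $\overline{\SSS}(S) \cong M_N(\mathbb C)$. Since this matrix algebra is simple, every irreducible representation of $\SSS(S)$ with shadow $r$ and puncture invariants $\{p_k\}$ is isomorphic to $\rho_0$.

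The main obstacle is the dimension bound and the simultaneous identification of the Zariski dense open subset $\mathcal U \subset \RR(S)$ on which it is sharp. For the tori, Frohman--Gelca's product-to-sum formula furnishes a basis of $\SSS(S)$ indexed by isotopy classes of essential multi-curves (by $\mathbb Z^2/{\pm 1}$ in the closed case, with a puncture correction in the once-punctured case); the ideal $\mathcal I_{r,\{p_k\}}$ imposes $N$-periodic linear relations that collapse this lattice to a fundamental domain of size $N^2$, and genericity of $r$ enters through the non-vanishing of an explicit determinant in $\Tr\, r(x)$, $\Tr\, r(y)$, $\Tr\, r(z)$. For the four-punctured sphere, the Askey--Wilson cubic couples all three generators simultaneously, and the reduction must instead be carried out by induction on a degree filtration on $\SSS(S)$, with $\mathcal U$ defined by the non-vanishing of an analogous resultant-type polynomial in the four puncture traces and the three channel traces; carrying out this degree-filtered reduction and verifying that its failure locus is a proper subvariety is the most intricate part of the argument.
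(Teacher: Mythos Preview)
Your strategy---pass to the quotient $\overline{\SSS}(S)$ by the central ideal determined by the invariants, bound its dimension by $N^2$, and conclude it is $M_N(\mathbb C)$---is genuinely different from the paper's. The paper never forms such a quotient; instead, for each nontrivial surface it works directly with an arbitrary irreducible $\rho$, diagonalises $\rho(X_3)$, and introduces explicit ``up'' and ``down'' operators $\mathcal U_k,\mathcal D_k$ (borrowed from Havl\'i\v cek--Po\v sta for $S_{1,1}$, built by hand for $S_{0,4}$) that permute the eigenspaces $V_k$ of $\rho(X_3)$ cyclically. The genericity hypothesis is exactly what makes the product $\prod_j \mathcal D_{k+j}\prod_j \mathcal U_{k+N-j}$ act on each $V_k$ by a nonzero scalar; this forces every $V_k$ to be one-dimensional and $\dim V=N$, and a final linear computation expresses the one remaining free parameter in terms of $t_1,t_2,t_3$ and the $p_i$. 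The closed torus is then deduced from $S_{1,1}$ by setting the puncture invariant to $-A^2-A^{-2}$.

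Your approach would be cleaner if it went through, but as written the core step---the bound $\dim_{\mathbb C}\overline{\SSS}(S)\le N^2$---is asserted rather than proved, and the tools you invoke do not straightforwardly deliver it. The Frohman--Gelca product-to-sum formula is for the \emph{closed} torus; there is no comparable lattice basis for $\SSS(S_{1,1})$, whose presentation is the Bullock--Przytycki $\mathrm U'_q(\mathfrak{so}_3)$ presentation used in the paper, so the ``$N$-periodic collapse to a fundamental domain of size $N^2$'' does not carry over without substantial new work. For $S_{0,4}$ you yourself flag the filtered reduction as ``the most intricate part'' and do not perform it; that is precisely the content of the theorem, and the paper's several pages of computation with $\mathcal U_k,\mathcal D_k$ and the factorisation of $\prod_j R_j$ are doing that work by other means. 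Finally, to conclude $\overline{\SSS}(S)\cong M_N(\mathbb C)$ from $\dim\overline{\SSS}(S)\le N^2$ you need $\dim V_0=N$ for the realised representation, which you take for granted; the paper obtains this as a consequence of its eigenspace argument rather than as an input. In short, the outline is plausible, but the substance has been deferred.
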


The cases of the sphere with at most 3 punctures are essentially trivial; see \S \ref{sect:3PuncSphere}. When $S$ is the one-puncture torus, we rely in \S \ref{sect:PuncTorus} on a presentation of $\SSS(S)$ given by Bullock-Przytycki \cite{bullockp} and on the systematic classification of all irreducible representations of the corresponding algebra by Havl\'i\v cek-Po\v sta \cite{havlicekp}; it is however non-trivial to relate the invariants of \cite{havlicekp} to those of Theorem~\ref{thm:InvariantsExist}.  The case of the unpunctured torus follows from the one-puncture case; see \S \ref{sect:NoPuncTorus}. 
 For the four-puncture sphere, we again use in \S \ref{sect:4PuncSphere} a presentation of $\SSS(S)$ exhibited in \cite{bullockp}, and extend to this context the arguments that we had developed for the one-puncture torus; however, the situation is  very significantly more complicated than for the one-puncture torus.
 
 The analysis of \cite{havlicekp} provides many different representations of the skein algebra of the one-puncture torus whose classical shadow is the trivial character. As a consequence, Conjecture~\ref{conj:MainConjecture} cannot hold without the genericity hypothesis.

\section{The Kauffman skein algebra}
\label{sect:KauffmanAlgebra}

Let $S$ be an oriented surface of finite topological type, without boundary. This means that $S = S_{g,p}$ is obtained by removing $p$ points from a closed oriented surface $\bar{S}$ of genus $g$. We consider \textit{framed links} in the thickened surface $S \times [0, 1]$, namely unoriented 1-dimensional submanifolds $K\subset S \times [0, 1]$ endowed with a continuous choice of a vector transverse to $K$ at each point of $K$. A \textit{framed knot} is a  framed link that is connected.

The following definition provides a convenient way to describe a framing, in particular when representing a link by a picture. If the projection of $K\subset S \times [0, 1]$ to $S$ is an immersion, the \textit{vertical framing} for $K$ is the framing that is everywhere  parallel to the $[0, 1]$ factor and points towards 1.

The \textit{framed link algebra} $ \mathcal{K}(S)$ is the vector space over $\mathbb{C}$ freely generated by the isotopy classes of all framed links $K\subset S \times [0, 1]$. This vector space $ \mathcal{K}(S)$ can
be endowed with a multiplication, where the product of $K_1$ and $K_2$ is defined as the framed link $K_1\cdot K_2 \subset S \times [0, 1]$ that is the union of $K_1$ rescaled in $S \times [0, \frac{1}{2}]$ and $K_2$ rescaled in $S \times [\frac{1}{2},1]$. In other words, the product $K_1 \cdot K_2$ is defined by superposition of the framed links $K_1$ and $K_2$. This \textit{superposition operation} is compatible with isotopies, and therefore provides a well-defined algebra structure on $ \mathcal{K}(S)$.

Three framed links $K_1$, $K_0$ and $K_{\infty}$ in $S \times [0, 1]$ form a \textit{Kauffman triple} if the only place where they differ is above a small disk in $S$, where they are as represented in Figure \ref{fig:skein} (as seen from above) and where the framing is vertical and pointing upwards (namely the framing is parallel to the $[0, 1]$ factor and points towards 1).

\begin{figure}[h]
 \SetLabels
( .16* -.1) $K_1 $ \\
(.52 * -.1) $K_0 $ \\
( .86* -.1) $K_\infty $ \\
\endSetLabels
\centerline{\AffixLabels{    \includegraphics[scale=0.5]{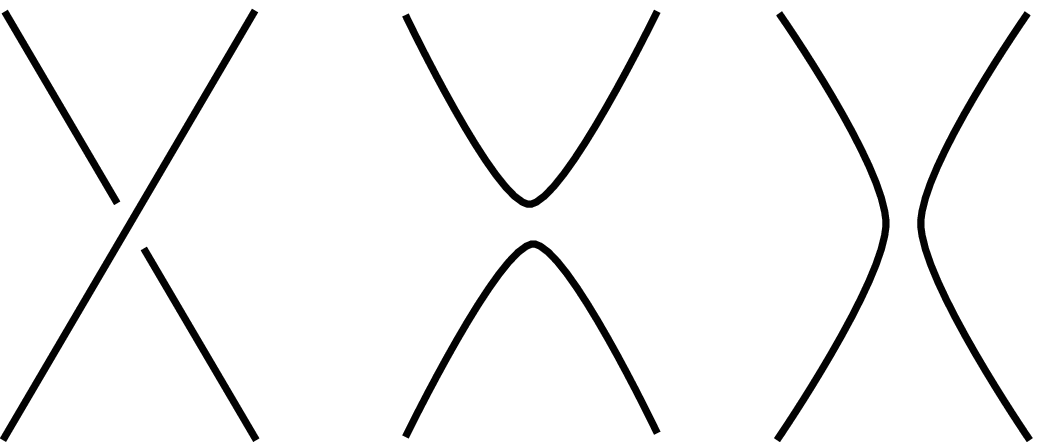} }}
\vskip 5pt
    \caption{A Kauffman triple}
    \label{fig:skein}
\end{figure}

For $A \in \mathbb{C} - \{0\}$, the \textit{Kauffman skein algebra} $\mathcal{S}_A(S)$ is the quotient of the
framed link algebra $ \mathcal{K}(S)$ by the linear subspace generated by:
\begin{enumerate}
\item all elements $K_1 - A^{-1} K_0 - A K_{\infty}\in \mathcal{K}(S)$ as $(K_1, K_0, K_{\infty})$ ranges over all Kauffman triples;
\item the element $\bigcirc + (A^2+A^{-2}) \varnothing $ where $\bigcirc \subset S\times [0,1]$ is an unknot projecting to asimple loop in $S$ and endowed with the vertical framing, and where $\varnothing$ is the empty link. 
\end{enumerate}

The superposition operation descends to a multiplication in  $\SSS(S)$, and endows  $\mathcal{S}_A(S)$ with the structure of an algebra. The class $[ \varnothing ]$ of the empty link is an identity element in  $\mathcal{S}_A(S)$, and we usually identify it to the scalar 1.

An element $[K] \in \mathcal{S}_A(S)$ represented by a framed link $K\subset S \times [0, 1]$ is a \emph{skein} in $S$. 

Throughout the article, we will assume that the parameter $A$ is a primitive $N$--root of $-1$ with $N$ odd; for instance, $A= \mathrm e^{ {\pi\mathrm i}/N}$. Because $N$ is odd, this implies that $A^2$ and $A^4$ are primitive $N$--roots of unity, a property that is frequently used in the article.

\section{Chebyshev polynomials}

The \emph{normalized $n$-th Chebyshev polynomial of the first kind} is the polynomial $T_n(x)$ such that $\Tr M^n = T_n(\Tr M)$ for every $M\in \SL$. It can be inductively computed by the recurrence relation that $T_n(x) = x T_{n-1}(x) - T_{n-2}(x)$, combined with the initial conditions $T_0(x)=2$ and $T_1(x)=x$. 

The polynomial $T_n(x)$ contains only even powers of $x$ when $n$ is even,  and only odd powers of $x$ when $n$  is odd. Also, applying the relation $\Tr M^n = T_n(\Tr M)$ to a rotation matrix gives the trigonometric identity that $2 \cos n\theta = T_n(2 \cos \theta)$. 

We will use the following computations.

\begin{lem}
\label{lem:chebyshev}
$ $
\begin{enumerate}
\item[(a)]
$T_n(a+a^{-1}) = a^n + a^{-n}$;
\item[(b)]
$T_N(x) - a^N - a^{-N} = \prod_{k=1}^{N}\left(x - a A^{2k}- a^{-1} A^{-2k}\right)$.
\end{enumerate}
\end{lem}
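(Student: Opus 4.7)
\textbf{Proof plan for Lemma~\ref{lem:chebyshev}.}

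For part (a), the plan is a direct induction on $n$ from the defining recurrence $T_n(x) = xT_{n-1}(x) - T_{n-2}(x)$ with $T_0 = 2$, $T_1(x)=x$. The base cases give $T_0(a+a^{-1})=2=a^0+a^{-0}$ and $T_1(a+a^{-1})=a+a^{-1}$. The inductive step is the one-line computation
$$(a+a^{-1})(a^{n-1}+a^{-(n-1)}) - (a^{n-2}+a^{-(n-2)}) = a^n + a^{-n}.$$
Alternatively, one can invoke the defining property $\Tr M^n = T_n(\Tr M)$ of the Chebyshev polynomial applied to the diagonal matrix $M = \mathrm{diag}(a,a^{-1}) \in \SL$.

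For part (b), the idea is to exhibit $N$ roots of the polynomial $P(x) := T_N(x) - a^N - a^{-N}$ and compare leading coefficients. Applying part (a) with $a$ replaced by $a A^{2k}$ yields
$$T_N\bigl(aA^{2k} + a^{-1}A^{-2k}\bigr) = a^N A^{2kN} + a^{-N} A^{-2kN}.$$
Since $A^N = -1$, we have $A^{2N} = 1$, and hence $A^{\pm 2kN} = 1$ for every integer $k$. Thus $x_k := aA^{2k} + a^{-1}A^{-2k}$ satisfies $T_N(x_k) = a^N + a^{-N}$, so every $x_k$ is a root of $P(x)$. Both $P(x)$ and the product $\prod_{k=1}^N (x - x_k)$ are monic polynomials in $x$ of degree $N$, so once the $x_k$ are known to be distinct the two expressions must coincide.

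To control distinctness, the plan is to note that $A^2$ is a primitive $N$-th root of unity (since $N$ is odd and $A^N = -1$), so the exponentials $A^{2k}$ for $k=1,\dots,N$ are all distinct. An equality $x_k = x_j$ then forces $aA^{2k} = a^{-1}A^{-2j}$, i.e.\ $a^2 = A^{-2(k+j)}$. Excluding the finite set of $a\in\mathbb C^*$ where some such relation holds, the $x_k$ are pairwise distinct and the factorization holds.

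The last step is a density argument: viewing both sides of the claimed identity as elements of $\mathbb C[a,a^{-1}][x]$, all coefficients are Laurent polynomials in $a$, and they agree on an infinite (Zariski-dense) subset of $\mathbb C^*$, so they agree identically, which extends the identity to every $a \in \mathbb C^*$. The main obstacle is precisely this distinctness/extension step; the clean root computation relies crucially on the input $A^{2N}=1$ coming from $A^N=-1$ with $N$ odd, which makes the right-hand side of part (a) collapse to $a^N + a^{-N}$ independently of $k$.
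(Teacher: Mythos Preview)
Your proposal is correct and follows essentially the same approach as the paper: part (a) via the trace identity (the paper uses only this, not the induction), and part (b) by exhibiting the $x_k = aA^{2k}+a^{-1}A^{-2k}$ as roots via part (a) and matching leading coefficients. The only difference is that you handle the distinctness of the $x_k$ explicitly through a genericity-plus-density argument in $\mathbb C[a,a^{-1}][x]$, whereas the paper simply asserts that the solution set of $T_N(x)=a^N+a^{-N}$ consists of the $x_k$ and concludes; your treatment is slightly more careful on this point but not a different method.
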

\begin{proof} Property (a) is a simple application of the relation $\Tr M^n = T_n(\Tr M)$ to a matrix $M\in \SL$ with eigenvalues $a$, $a^{-1}$. 

For (b), note that (a) implies that the solutions of the equation $T_N(x) = a^N +a^{-N}$ are the numbers $x= b +b^{-1}$ with $b^N = a^N$. Since $A^2$ is a primitive $N$-root of unity, these are the numbers of the form $x= aA^{2k} +a^{-1} A^{-2k}$. The equality then follows from the fact that the highest degree term of $T_N(x)$ is $x^N$. 
\end{proof}

\section{The sphere with at most three punctures}
\label{sect:3PuncSphere}

This case is essentially trivial. Indeed, in the sphere with at most 3 punctures, every simple closed curve is isotopic, either to a trivial knot, or to  a simple loop $P_k$ going around one of the punctures. It follows that the algebra $\SSS(S)$ is generated by the skeins $[P_k]$, so that every irreducible representation $\rho \colon \SSS(S) \to \End(V)$ is completely determined by its puncture invariants $p_k$ (which also determine the classical shadow).

\section{The one-puncture torus $S_{1,1}$}
\label{sect:PuncTorus}

This section is devoted to the case of the one-puncture torus $S_{1,1}$. 

\subsection{A presentation for the skein algebra $\SSS(S_{1,1})$}
\label{subsect:PresentationPunctTorus}
 We will use the presentation of $\SSS(S_{1,1})$ given by  Bullock and Przytycki in \cite{bullockp}. 

Identify the punctured torus $S_{1,1}$ to the quotient space obtained from a square $[0,1] \times [0,1]$ by removing the four corners  and gluing together opposite sides in the usual fashion. Let $X_1$ be the closed curve that is the image in $S_{1,1}$ of the vertical line segment $\{\frac12\} \times [0,1]$, let $X_2$ be the  image of the horizontal line segment $  [0,1] \times \{\frac12\}$, and let $X_3$ be the closed curve that is the image of the two slope 1 line segments respectively parametrized by $t\mapsto (t+\frac12, t)$ and $t \mapsto (t, t+\frac12)$ with $0 \leq t\leq \frac12$. See Figure~\ref{fig:PunctTorus}. 

Also, let $P\subset S_{1,1}$ be a simple loop going around the puncture.

\begin{figure}[htbp]

\SetLabels
( .17* .75) $X_1 $ \\
( .78* .79) $X_2 $ \\
( .15*.3 ) $ X_3$ \\
(.27 *.15 ) $ X_3$ \\
(.7 * .34) $P $ \\
( .87*.34 ) $P $ \\
( .7* .1) $ P$ \\
(.87 *.1 ) $ P$ \\
\endSetLabels
\centerline{\AffixLabels{\includegraphics{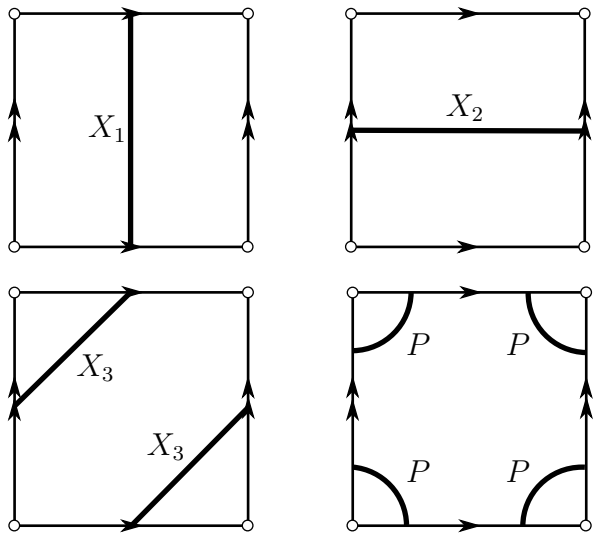}}}
\caption{Curves on the one-puncture torus}
\label{fig:PunctTorus}
\end{figure}

\begin{prop}[\cite{bullockp}]
\label{prop:PresentationPuncTorus}

The Kauffman skein algebra $\SSS(S_{1,1})$ admits a presentation by  generators $X_1$, $X_2$ and $X_3$ and relations
\begin{align*}
    A X_1 X_{2}-A^{-1} X_{2} X_{1}&=(A^2-A^{-2})X_{3} \\
    A X_2 X_{3}-A^{-1} X_{3} X_{2}&=(A^2-A^{-2})X_{1} \\
    A X_3 X_{1}-A^{-1} X_{1} X_{3}&=(A^2-A^{-2})X_{2}
\end{align*}
where the $X_i$ are represented by the closed curves indicated above, endowed with vertical framing. 

If the  loop $P$ going around the puncture is also endowed with the vertical framing, the corresponding element of $\SSS(S_{1,1})$ is equal to
$$
P=A X_1 X_2 X_3 - A^2 X_1^2 - A^{-2}X_2^2 - A^2 X_3^2 +  A^2 + A^{-2}.
$$ This element is central in $\SSS(S_{1,1})$. \qed
\end{prop}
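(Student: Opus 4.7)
The plan is to take completeness of the presentation from \cite{bullockp} and focus on verifying the three things that actually need checking in $\SSS(S_{1,1})$: the skew-commutation relations among $X_1, X_2, X_3$, the polynomial formula for the puncture skein $P$, and the centrality of $P$.

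For each skew-commutation relation, say $A X_1 X_2 - A^{-1} X_2 X_1 = (A^2-A^{-2}) X_3$, I would realize $X_1 X_2$ and $X_2 X_1$ as superpositions in $S_{1,1}\times [0,1]$ of the vertically framed representatives of $X_1$ and $X_2$. Their projections to $S_{1,1}$ meet transversely in a single point, giving diagrams with exactly one crossing that is positive in one product and negative in the other. Applying the Kauffman skein relation to both crossings produces the same two resolutions $K_0$ and $K_\infty$, so the combination $A X_1 X_2 - A^{-1} X_2 X_1$ cancels one resolution and leaves $(A^2-A^{-2})$ times the other; inspection identifies that remaining resolution with $X_3$. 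The remaining two relations then follow by the evident cyclic symmetry permuting $(X_1, X_2, X_3)$ via the $\mathrm{SL}_2(\mathbb Z)$-action on $S_{1,1}$.

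For the formula for $P$, the natural representative of the puncture loop is homotopic to the commutator of the generators of $\pi_1(S_{1,1})$ associated to $X_1$ and $X_2$. Drawn in the square model, this curve can be pushed to a closed curve on $S_{1,1}$ with a small controlled number of self-intersections, each of which is resolved by the Kauffman relation. Reducing trivial components by $\bigcirc = -(A^2+A^{-2})\varnothing$ and collecting terms with help of the skew-commutation relations above should express $P$ as the polynomial in the statement. Centrality of $P$ is then essentially topological: $P$ is isotopic to a framed loop contained in an arbitrarily small punctured disk around the puncture, and any other skein $[K]$ admits a representative in the complement of that disk, so superposing $P$ above or below $K$ yields isotopic framed links and $[P][K]=[K][P]$ in $\SSS(S_{1,1})$.

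The main obstacle is the diagrammatic computation for $P$: the number of crossings and the bookkeeping of $A$-powers, signs, and trivial-loop reductions make errors easy. A useful consistency check is to specialize to $A=-1$, where $\SSS(S_{1,1})$ degenerates to the coordinate ring of $\RR(S_{1,1})$ and the classical Fricke identity $\Tr[r(x),r(y)] = \Tr r(x)^2 + \Tr r(y)^2 + \Tr r(xy)^2 - \Tr r(x)\,\Tr r(y)\,\Tr r(xy) - 2$ must be recovered (up to the sign convention $[K]\leftrightarrow -\Tr r(K)$).
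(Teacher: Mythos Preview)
The paper does not actually prove this proposition: it is attributed to Bullock--Przytycki \cite{bullockp} and closed with a bare \qed. So there is no proof in the paper to compare against; the authors simply import the result.

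Your sketch is consistent with that: you defer the hard part---that these relations suffice---to \cite{bullockp}, and outline the easy direction, that the relations actually hold in $\SSS(S_{1,1})$. The arguments you give are the standard ones and are correct: the single-crossing resolution for the skew-commutation relations, the topological centrality argument for $P$ via isotopy into a peripheral annulus, and the $A=-1$ Fricke identity as a sanity check on the formula for $P$. The only place where your outline is genuinely incomplete is the diagrammatic reduction expressing $P$ in terms of $X_1,X_2,X_3$; you acknowledge this yourself, and it is indeed just careful bookkeeping rather than a missing idea. Since the paper itself does not carry out any of this and relies entirely on the citation, your proposal already goes further than the paper does.
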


The above presentation is also a presentation of another algebra, the algebra  $\mathrm U'_q(\mathfrak{so}_3)$ whose irreducible representations were classified by Havl\'{i}\v{c}ek and Po\v{s}ta \cite{havlicekp}. We will heavily rely on the arguments of \cite{havlicekp}, while adapting them to our goals.

\subsection{Reconstructing an irreducible representation of $\SSS(S_{1,1})$}

\begin{thm}
\label{thm:ReconstructPuncTorus}
Let $\rho \colon \mathcal{S}_A(S_{1,1}) \to \text{End}(V)$ be  an irreducible representation with classical shadow  $r\in \RR(S_{1,1})$ and puncture invariant $p\in \mathbb C$. Suppose that 
\begin{align*}
&\Tr r(X_3) \neq \pm2\\
\text{and } &\Tr r(X_1)\Tr r(X_2)\Tr r(X_3) + \Tr r(X_1)^2 + \Tr r(X_2)^2 \neq 0
\end{align*}
for the curves $X_1$, $X_2$, $X_3$ of \S {\upshape \ref{subsect:PresentationPunctTorus}}. Then, up to isomorphism, the representation $\rho \colon \mathcal{S}_A(S_{1,1}) \to \text{End}(V)$ is completely determined by $r$ and $p$. In addition, $V$ has dimension $N$.  
\end{thm}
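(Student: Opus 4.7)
The strategy is to diagonalize $\rho(X_3)$ first, to build ladder operators out of $\rho(X_1)$ and $\rho(X_2)$ that cycle through its eigenspaces, and then to read off that the matrix of the entire representation is dictated by the invariants $r$ and $p$.

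By Theorem~\ref{thm:InvariantsExist}(1), $T_N(\rho(X_3)) = -(\Tr r(X_3))\Id_V$, so the spectrum of $\rho(X_3)$ lies among the $N$ roots of $T_N(x) + \Tr r(X_3)$. I choose $b\in\mathbb C^*$ with $b^N + b^{-N} = -\Tr r(X_3)$; the hypothesis $\Tr r(X_3)\neq\pm2$ rules out $b=\pm1$, and together with $A^2$ being a primitive $N$-th root of unity, Lemma~\ref{lem:chebyshev}(b) yields that the $N$ candidate eigenvalues
\[
\lambda_k = bA^{2k} + b^{-1}A^{-2k}, \qquad k=0,1,\ldots,N-1,
\]
are pairwise distinct. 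In particular $\rho(X_3)$ is diagonalizable, and each $\lambda_k$ labels a (possibly zero) eigenspace $V_k\subset V$.

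Next, following Havlíček-Pošta~\cite{havlicekp}, I would select scalars $\mu^\pm$ so that the operators $L^\pm := \rho(X_1) + \mu^\pm \rho(X_2)$ satisfy a $q$-commutation relation with $\rho(X_3)$: the two commutators in Proposition~\ref{prop:PresentationPuncTorus} that express $X_3 X_1$ and $X_3 X_2$ then combine to give $L^\pm V_k \subset V_{k\pm1}$. Irreducibility of $\rho$, combined with the distinct eigenvalues $\lambda_k$, forces each nonzero $V_k$ reached by iterating the $L^\pm$ to be one-dimensional and all $N$ of them to be reached, yielding $\dim V = N$ and a basis $v_0,\ldots,v_{N-1}$ with $\rho(X_3)v_k = \lambda_k v_k$ and $L^+ v_k = c_k v_{k+1}$ for nonzero scalars $c_k$.

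Once this basis is fixed, the skein relations express $\rho(X_1)v_k$ and $\rho(X_2)v_k$ as explicit linear combinations of $v_{k-1}$ and $v_{k+1}$ whose coefficients are rational functions of $\Tr r(X_1)$, $\Tr r(X_2)$, $\Tr r(X_3)$ and $p$. The residual rescaling freedom $v_k\mapsto\sigma_k v_k$ affects the individual $c_k$ but not their product $c_0 c_1\cdots c_{N-1}$, which governs the scalar by which $(L^+)^N$ acts on $V_0$; that scalar is pinned down by the Chebyshev central elements $T_N(\rho(X_i))$ together with the central-element formula for $P$ given in Proposition~\ref{prop:PresentationPuncTorus}, combined with $\rho(P) = p\Id_V$. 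Thus all matrix entries are uniquely determined by $r$ and $p$, and two such representations are isomorphic via the diagonal change of basis matching their normalized forms. The main obstacle is verifying that the ladder actually cycles through all $N$ eigenspaces, i.e.\ that none of the coefficients $c_k$ vanishes: the recursion for the $c_k$ involves explicit polynomials in the $\lambda_k$, in the trace invariants, and in $p$, and I expect the second genericity hypothesis
\[
\Tr r(X_1)\Tr r(X_2)\Tr r(X_3) + \Tr r(X_1)^2 + \Tr r(X_2)^2 \neq 0
\]
to arise exactly as the non-degeneracy condition guaranteeing $\prod c_k \neq 0$ after one full cycle around the $N$ eigenvalues. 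The other non-trivial task, flagged in the introduction, is to translate the spectral parameters used by Havlíček-Pošta into the Bonahon-Wong classical shadow and puncture invariant.
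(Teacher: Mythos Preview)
Your outline is essentially the paper's own proof: diagonalize $\rho(X_3)$, build raising/lowering operators out of $\rho(X_1)$ and $\rho(X_2)$ that shift the eigenspaces $V_k$, use the central element $P$ to compute the composite $\mathcal D_{k+1}\mathcal U_k$ on $V_k$, take the product around the full cycle and identify it (via Lemma~\ref{lem:chebyshev}(b)) with $t_1t_2t_3+t_1^2+t_2^2$, so that the second genericity hypothesis is exactly the non-vanishing you anticipate; finally pin down the one remaining holonomy parameter using the two relations $T_N(\rho(X_1))=t_1\Id_V$ and $T_N(\rho(X_2))=t_2\Id_V$ together (one alone leaves a quadratic ambiguity).

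One correction is needed. No \emph{fixed} scalars $\mu^\pm$ make $L^\pm=\rho(X_1)+\mu^\pm\rho(X_2)$ satisfy a $q$-commutation with $\rho(X_3)$: a short computation with the relations of Proposition~\ref{prop:PresentationPuncTorus} shows that sending $V_k$ to $V_{k+1}$ forces the coefficient of $\rho(X_2)$ to depend on $k$. The correct operators, exactly as in Havl\'\i\v cek--Po\v sta and in the paper, are
\[
\mathcal U_k = A\,\rho(X_1) - x_3 A^{2k}\rho(X_2),\qquad \mathcal D_k = A\,\rho(X_1) - x_3^{-1}A^{-2k}\rho(X_2).
\]
With this adjustment your argument goes through verbatim.
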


Theorem~\ref{thm:ReconstructPuncTorus} proves our Generic Uniqueness Theorem~\ref{thm:MainThmIntro} in the case of the one-puncture torus $S_{1,1}$, since its hypotheses describe a Zarisky dense open subset of the character variety $\RR(S)$. We will late provide a slightly more general result, Theorem~\ref{thm:ReconstructPuncTorusBis}. 

The proof of Theorem~\ref{thm:ReconstructPuncTorus} will take a while, and we will split it into several lemmas. We assume that the hypotheses of Theorem~\ref{thm:ReconstructPuncTorus}  hold, for the remainder of this section. 

For notational convenience, set $t_i = -\Tr r(X_i)$ for $i=1$, $2$, $3$, so that 
$T_N\bigl(\rho(X_i) \bigr) = t_i \Id_V$.The hypotheses of Theorem~\ref{thm:ReconstructPuncTorus} are then that $t_3 \neq \pm2$ and $t_1t_2t_3 + t_1^2 +t_2^2 \neq 0$.

The numbers $t_1$, $t_2$, $t_3$ and the puncture invariant $p$ are related by the following equation.

\begin{lem}
\label{lem:RelationsPunctureChebyshev}
$$
T_N(p) = - t_1 t_2 t_3 - t_1^2 - t_2^2 - t_3^2 + 2
$$
\end{lem}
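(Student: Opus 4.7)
The plan is to invoke part (3) of Theorem~\ref{thm:InvariantsExist}, which tells us that $T_N(p) = -\Tr r(P)$ for the puncture loop $P$, and then to compute $\Tr r(P)$ directly via the classical commutator trace identity in $\SL$.

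First I would identify $P$ as an element of $\pi_1(S_{1,1})$. Choosing free generators $a$, $b$ of $\pi_1(S_{1,1})$ so that the unframed homotopy classes of $X_1$, $X_2$, $X_3$ are represented by $a$, $b$, $ab$ respectively (this can be read off from Figure~\ref{fig:PunctTorus}), the puncture loop $P$ is freely homotopic to the commutator $[a,b] = a b a^{-1} b^{-1}$, or to its inverse, which has the same trace.

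Next I would invoke the classical Fricke identity: for any $A$, $B \in \SL$,
$$\Tr \bigl([A,B]\bigr) = (\Tr A)^2 + (\Tr B)^2 + (\Tr AB)^2 - (\Tr A)(\Tr B)(\Tr AB) - 2.$$
Applied to $A = r(a)$ and $B = r(b)$, and using $\Tr r(X_i) = -t_i$, this yields
$$\Tr r(P) = t_1^2 + t_2^2 + t_3^2 + t_1 t_2 t_3 - 2,$$
and negating gives the stated formula for $T_N(p)$.

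No serious obstacle arises; the computation is routine once the geometric identification of $P$ with $[a,b]$ is in place, and the only thing to monitor is the sign convention $\Tr r(X_i) = -t_i$. An alternative that bypasses $\pi_1$ entirely is to specialize the central skein identity for $P$ in Proposition~\ref{prop:PresentationPuncTorus} to the classical limit $A = -1$: there the skein algebra becomes the coordinate ring of $\RR(S_{1,1})$, every vertically framed loop $K$ represents the function $r \mapsto -\Tr r(K)$, and the same formula drops out after direct substitution.
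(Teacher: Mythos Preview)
Your proposal is correct. Your primary argument via the Fricke commutator identity is a genuinely different route from the paper's, which instead invokes Bullock's homomorphism $\Theta_r\colon \mathcal S_{-1}(S_{1,1})\to\mathbb C$ sending $[K]\mapsto -\Tr r(K)$ and applies it to the $A=-1$ specialization of the central relation for $P$ in Proposition~\ref{prop:PresentationPuncTorus}; this is exactly the alternative you sketch in your final sentence. The two approaches are morally the same computation---the Fricke identity is what makes $\Theta_r$ an algebra homomorphism in the first place---but your version is more self-contained (no appeal to the skein-algebra-at-$-1$ machinery), while the paper's version has the virtue of reusing Proposition~\ref{prop:PresentationPuncTorus} already in hand and of generalizing painlessly to surfaces where the puncture relation is more complicated than a single commutator trace.
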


\begin{proof}
This is a consequence of the special case where $N=1$ and $A=-1$. 
Indeed, an observation of Bullock \cite{Bullock} (see also \cite{PrzSik}) shows that the character $r\in \RR(S_{1,1})$ uniquely determines a homomorphism $\Theta_{r} \colon \mathcal S_{-1}(S_{1,1}) \to \mathbb C$ by the property that $\Theta_{r} \bigl( [K] \bigr) = - \Tr r(K)$ for every framed knot $K\subset S_{1,1} \times [0,1]$. 

In the special case $A=-1$ considered, the second half of Proposition~\ref{prop:PresentationPuncTorus} states that the elements $X_1$, $X_2$, $X_3$, $P \in \mathcal S_{-1}(S_{1,1})$ satisfy the relation
$$
P=- X_1 X_2 X_3 -  X_1^2 - X_2^2 -  X_3^2 +  2
$$ 
in $\mathcal S_{-1}(S_{1,1})$. Applying the homomorphism $\Theta_{r}$ on both sides of the equation, and remembering that $\Theta_{r} (X_i) = - \Tr r(X_i) = t_i$ and $\Theta_{r} (P) = - \Tr r(P) =T_N(p)$, then provides the equality sought.
\end{proof}

Choose a number  $x_3 \in \mathbb{C}$ such that $t_3 = x_3^N + x_3^{-N}$. 

Because $T_N\bigl (\rho(X_3)\bigr) = t_3 \Id_V$, all possible eigenvalues $\lambda$ of  $\rho(X_3)$ are such that $T_N (\lambda) = t_3=x_3^N + x_3^{-N} $, and  therefore equal to one of the numbers  $\lambda_k=x_3 A^{2k}+ x_3^{-1} A^{-2k}$ by Lemma~\ref{lem:chebyshev}(a). Since $t_3 \neq \pm 2$ by hypothesis, the numbers  $\lambda_k$ with $k=1$, $2$,\dots, $N$ are distinct. It is convenient to consider all integer indices $k$, so that $\lambda_{k+N} = \lambda_k$. 

Define $V_k=\{v\in V; \rho(X_3)v = \lambda_k v \}$. Namely, $V_k$ is the eigenspace corresponding to $\lambda_k$ if $\lambda_k$ is really an eigenvalue of $\rho(X_3)$, and is 0 otherwise.

Our key tool is provided by  the operators $\mathcal{U}_k$ and $\mathcal{D}_k\in \End(V)$ defined by
\begin{align*}
\mathcal{U}_k & = A \rho(X_1) - x_3 A^{2k} \rho(X_2)\\
\mathcal{D}_k & = A \rho(X_1) - x_3^{-1} A^{-2k} \rho(X_2).
\end{align*}

These ``up'' and ``down'' operators $\mathcal{U}_k$ and $\mathcal{D}_k$ are borrowed from \cite{havlicekp}. What makes them  so useful is the following property. 

\begin{lem}
\label{lem:UpDown}
The operator $\mathcal U_k$ sends the subspace $V_k$ to $V_{k+1}$, and $\mathcal D_k$ sends $V_k$ to $V_{k-1}$. 
\end{lem}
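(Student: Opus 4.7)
The plan is to apply $\rho(X_3)$ directly to $\mathcal{U}_k v$ and $\mathcal{D}_k v$ for $v \in V_k$, use the two commutation relations of Proposition~\ref{prop:PresentationPuncTorus} that involve $X_3$ to move $\rho(X_3)$ past $\rho(X_1)$ and $\rho(X_2)$, and then verify the resulting scalar identities. Solving the last two relations of that proposition for the products beginning with $X_3$ gives
\begin{align*}
\rho(X_3)\rho(X_1) &= A^{-2}\,\rho(X_1)\rho(X_3) + (A - A^{-3})\,\rho(X_2),\\
\rho(X_3)\rho(X_2) &= A^{2}\,\rho(X_2)\rho(X_3) - (A^{3} - A^{-1})\,\rho(X_1),
\end{align*}
and since $\rho(X_3) v = \lambda_k v$, each of $\rho(X_3)\rho(X_1)v$ and $\rho(X_3)\rho(X_2)v$ becomes an explicit linear combination of $\rho(X_1)v$ and $\rho(X_2)v$.

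Substituting these into $\rho(X_3)\mathcal{U}_k v = A\,\rho(X_3)\rho(X_1)v - x_3 A^{2k}\,\rho(X_3)\rho(X_2)v$ and collecting like terms reduces the claim $\rho(X_3)\mathcal{U}_k v = \lambda_{k+1}\mathcal{U}_k v$ to two scalar identities: the coefficient of $\rho(X_1)v$ should equal $A\lambda_{k+1}$, and the coefficient of $\rho(X_2)v$ should equal $-x_3 A^{2k}\lambda_{k+1}$. Writing $\lambda_k = x_3 A^{2k} + x_3^{-1}A^{-2k}$ and $\lambda_{k+1} = x_3 A^{2k+2} + x_3^{-1}A^{-2k-2}$, I verify both identities by expanding each side and observing that two of the four resulting monomials cancel, leaving exactly the target.

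The argument for $\mathcal{D}_k$ is essentially identical: replacing $x_3 A^{2k}$ by $x_3^{-1}A^{-2k}$ throughout the same computation, the coefficients of $\rho(X_1)v$ and $\rho(X_2)v$ in $\rho(X_3)\mathcal{D}_k v$ now collapse to $A\lambda_{k-1}$ and $-x_3^{-1}A^{-2k}\lambda_{k-1}$, showing $\mathcal{D}_k v \in V_{k-1}$. I do not expect any conceptual obstacle; the coefficients $x_3 A^{2k}$ and $x_3^{-1}A^{-2k}$ appearing in the definitions of $\mathcal{U}_k$ and $\mathcal{D}_k$ have been engineered precisely so that these cancellations occur, and the only thing to watch out for is bookkeeping the various powers of $A$.
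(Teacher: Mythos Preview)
Your proposal is correct and follows essentially the same approach as the paper: both compute $\rho(X_3)\mathcal{U}_k v_k$ by using the relations of Proposition~\ref{prop:PresentationPuncTorus} to move $\rho(X_3)$ past $\rho(X_1)$ and $\rho(X_2)$, then apply $\rho(X_3)v_k=\lambda_k v_k$ and collect terms to recover $\lambda_{k+1}\mathcal{U}_k v_k$. Your organization into two scalar identities for the coefficients of $\rho(X_1)v$ and $\rho(X_2)v$ is a slightly cleaner bookkeeping than the paper's single running computation, but the substance is identical.
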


\begin{proof}We want  to show that $\rho(X_3) \mathcal{U}_k v_k = \lambda_{k+1} \mathcal{U}_k v_k$ for every $v_k\in V_k$. For this, expand
$$
\rho(X_3) \mathcal{U}_k v_k=\rho(X_3)\left(A \rho(X_1) - x_3 A^{2k} \rho(X_2)\right) v_k = \left(A \rho(X_3 X_1) - x_3 A^{2k} \rho(X_3 X_2)\right) v_k .
$$

Using the relations of Proposition~\ref{prop:PresentationPuncTorus}, 
\begin{align*}
\rho(X_3) \mathcal{U}_k v_k & =(A \rho(X_3 X_1) - x_3 A^{2k} \rho(X_3 X_2)) v_k  \\
& =  A \rho( A^{-2} X_1 X_3 + A^{-1}(A^2 - A^{-2}) X_2)v_k \\
&\qquad \qquad- x_3 A^{2k} \rho(A^2 X_2 X_3 - A (A^2 - A^{-2}) X_1) v_k \\
& =  A^{-1} \rho(X_1)\rho(X_3)v_k + (A^2-A^{-2})\rho(X_2)v_k  \\
&\qquad \qquad -  x_3 A^{2k+2} \rho(X_2)\rho(X_3)v_k + x_3 A^{2k+1} (A^2 - A^{-2}) \rho(X_1) v_k.
\end{align*}

Using the facts that $\rho(X_3) v_k = \lambda_k v_k$ and $\lambda_k=x_3 A^{2k}+ x_3^{-1} A^{-2k}$, we obtain
\begin{align*}
\rho(X_3) \mathcal{U}_k v_k &  =  A^{-1} \rho(X_1)\lambda_kv_k + (A^2-A^{-2})\rho(X_2)v_k \\
&\qquad \qquad - x_3 A^{2k+2} \rho(X_2)\lambda_k v_k+ x_3 A^{2k+1} (A^2 - A^{-2}) \rho(X_1) v_k  \\
& =  (x_3 A^{2k+3} + x_3^{-1} A^{-2k-1}) \rho(X_1)v_k - (x_3^2 A^{4k+2} + A^{-2}) \rho(X_2) v_k \\
& = \lambda_{k+1} \bigl(A \rho(X_1) - x_3 A^{2k} \rho(X_2) \bigr) v_k = \lambda_{k+1} \mathcal{U}_k v_k
\end{align*}
which shows that $\mathcal{U}_k v_k$ belongs to $V_{k+1}$. 

The proof that $\mathcal{D}_k$ sends $V_k$ to $V_{k-1}$ is similar. 
\end{proof}

The indexing of the eigenspaces $V_k$ of $\rho(X_3)$ and of the operators $\mathcal{U}_k$, $\mathcal{D}_k$ depends on our choice of $x_3$ such that $t_3 = x_3^N + x_3^{-N}$. In particular, replacing $x_3$ by $x_3 A^{2l}$ replaces $V_k$, $\mathcal{U}_k$, $\mathcal{D}_k$ by $V_{k - l }$, $\mathcal{U}_{k - l}$ and $\mathcal{D}_{k - l}$, respectively. Similarly, replacing $x_3$ by $x_3^{-1}$ flips the order and replaces  $V_k$, $\mathcal{U}_k$, $\mathcal{D}_k$ by  $V_{N-k}$, $\mathcal{D}_{N-k}$, $\mathcal{U}_{N-k}$.

\begin{lem}
\label{lem:ExpressRhoUpDown}
For every  vector $v_k \in V_k$, 
\begin{align*}
\rho(X_1) v_k &= -\frac{ x_3^{-1} A^{-2k-1}}{x_3 A^{2k} - x_3^{-1} A^{-2k}} \mathcal{U}_k v_k + \frac{ x_3 A^{2k-1}}{x_3 A^{2k} - x_3^{-1} A^{-2k}} \mathcal{D}_k v_k \\
\rho(X_2) v_k &= -\frac{ 1 }{x_3 A^{2k} - x_3^{-1} A^{-2k}} \mathcal{U}_k v_k + \frac{ 1 }{x_3 A^{2k} - x_3^{-1} A^{-2k}} \mathcal{D}_k v_k \\
\rho(X_3) v_k &= (x_3 A^{2k}+ x_3^{-1} A^{-2k}) v_k.
\end{align*}
\end{lem}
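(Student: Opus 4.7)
\medskip

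\noindent\textbf{Proof plan.} The third formula is immediate: by definition, $v_k \in V_k$ means $\rho(X_3) v_k = \lambda_k v_k = (x_3 A^{2k} + x_3^{-1} A^{-2k}) v_k$.

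\medskip

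For the first two formulas, I would simply regard the definitions
\begin{align*}
\mathcal{U}_k &= A \rho(X_1) - x_3 A^{2k} \rho(X_2),\\
\mathcal{D}_k &= A \rho(X_1) - x_3^{-1} A^{-2k} \rho(X_2)
\end{align*}
as a linear system in the two ``unknowns'' $\rho(X_1)$ and $\rho(X_2)$, and invert it. The determinant of this $2\times 2$ system is
$$
A\bigl(-x_3^{-1} A^{-2k}\bigr) - A\bigl(-x_3 A^{2k}\bigr) = A\bigl(x_3 A^{2k} - x_3^{-1} A^{-2k}\bigr),
$$
and solving by Cramer's rule yields exactly the stated expressions for $\rho(X_1) v_k$ and $\rho(X_2) v_k$ (working formally on $V_k$, since the right-hand sides are defined in all of $V$).

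\medskip

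The only substantive point is to check that the determinant is nonzero, i.e.\ that $x_3 A^{2k} \neq x_3^{-1} A^{-2k}$, equivalently $x_3^2 A^{4k} \neq 1$. Suppose for contradiction that $x_3^2 A^{4k} = 1$; then $x_3 A^{2k} = \pm 1$, so $x_3 = \pm A^{-2k}$. Since $A^N = -1$, this gives $x_3^N = \pm A^{-2kN} = \pm 1$, whence $t_3 = x_3^N + x_3^{-N} = \pm 2$, contradicting the hypothesis $t_3 \neq \pm 2$ of Theorem~\ref{thm:ReconstructPuncTorus}. So the system is invertible and the formulas follow. I expect no real obstacle here; the lemma is essentially bookkeeping, and its usefulness will come in the subsequent lemmas where one computes how the $\mathcal{U}_k$ and $\mathcal{D}_k$ compose.
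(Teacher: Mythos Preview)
Your proposal is correct and follows exactly the same approach as the paper: solve the defining linear system for $\rho(X_1)$ and $\rho(X_2)$, check that the determinant is nonzero using the hypothesis $t_3\neq\pm2$, and observe that the third line is just the definition of $V_k$. The paper's proof is essentially a one-line version of what you wrote.
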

\begin{proof} The first two lines are obtained by solving for  $\rho(X_1) v_k $ and $\rho(X_3) v_k $ in the definition of $\mathcal U_k$ and $\mathcal D_k$. We just need to check that the denominators are non-zero. But this immediately follows from the hypothesis that $t_3 = x_3^N + x_3^{-N}$ is different from $\pm2$. 

The last equation is just a rephrasing of the definition of the subspace $V_k$. 
\end{proof}

Our next computation shows how the operators $\mathcal{U}_k$ and $\mathcal{D}_k$ interact with each other. Note that $\mathcal U_k$ sends $V_k$ to $V_{k+1}$, and that $\mathcal D_{k+1}$ sends $V_{k+1}$ back to $V_k$. 

\begin{lem}
\label{lem:ComposeUpAndDown} For every $v_k \in V_k$, 
$$
\mathcal{D}_{k+1} \mathcal{U}_k v_k = - (p+ x_3^2 A^{4k+2} + x_3^{-2} A^{-4k-2}) v_k.
$$
\end{lem}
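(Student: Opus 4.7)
The computation will be a direct but somewhat lengthy manipulation using the three ingredients already established: (i) the defining formulas for $\mathcal{U}_k$, $\mathcal{D}_{k+1}$; (ii) the commutation relations in Proposition~\ref{prop:PresentationPuncTorus}; (iii) the fact that the central element $P$ acts on $V$ as $p\,\Id_V$ and satisfies the polynomial identity given in Proposition~\ref{prop:PresentationPuncTorus}.

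\textbf{Step 1: expand the product.} Multiplying out the definitions gives
\[
\mathcal{D}_{k+1}\mathcal{U}_k = A^{2}\rho(X_1)^2 + A^{-2}\rho(X_2)^2 - x_3 A^{2k+1}\rho(X_1 X_2) - x_3^{-1} A^{-2k-1}\rho(X_2 X_1).
\]

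\textbf{Step 2: combine the mixed terms.} The relation $A X_1 X_2 - A^{-1}X_2 X_1 = (A^2-A^{-2})X_3$ lets me rewrite $\rho(X_1 X_2)$ in terms of $\rho(X_2 X_1)$ and $\rho(X_3)$. The cross-terms then collapse to
\[
-\bigl[x_3 A^{2k-1} + x_3^{-1} A^{-2k-1}\bigr]\rho(X_2 X_1) \;-\; x_3 A^{2k+1}(A-A^{-3})\rho(X_3).
\]
The key observation is that the bracket equals $A^{-1}\lambda_k$. Since $\rho(X_3)v_k = \lambda_k v_k$, applying the operator to $v_k$ converts $A^{-1}\lambda_k\rho(X_2X_1)v_k$ into $A^{-1}\rho(X_2 X_1 X_3)v_k$.

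\textbf{Step 3: invoke the puncture formula.} I then use the relation $X_2X_1 = A^2 X_1 X_2 - (A^3-A^{-1})X_3$ once more to turn $X_2X_1X_3$ into $A^2 X_1X_2X_3 - (A^3-A^{-1})X_3^2$. At this point the expression for $\mathcal{D}_{k+1}\mathcal{U}_k v_k$ involves only $\rho(X_1)^2$, $\rho(X_2)^2$, $\rho(X_1X_2X_3)$, $\rho(X_3)^2$ and $\rho(X_3)$. The central formula
\[
P = A X_1 X_2 X_3 - A^2 X_1^2 - A^{-2}X_2^2 - A^2 X_3^2 + A^2 + A^{-2}
\]
(with $\rho(P) = p\,\Id_V$) lets me replace the combination $A^2\rho(X_1)^2+A^{-2}\rho(X_2)^2 - A\rho(X_1X_2X_3)$ by $-p\,\Id_V - A^2\rho(X_3)^2 + (A^2+A^{-2})\Id_V$, leaving only $\rho(X_3)$- and $\rho(X_3)^2$-terms in addition to $-p\,v_k$.

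\textbf{Step 4: final scalar simplification.} Substituting $\rho(X_3)v_k = \lambda_k v_k$ with $\lambda_k = x_3 A^{2k} + x_3^{-1}A^{-2k}$, everything becomes a scalar multiple of $v_k$. Expanding $\lambda_k^2$ and the product $x_3 A^{2k+1}(A-A^{-3})\lambda_k$, the terms of the form $x_3^2 A^{4k-2}$ and the constant terms $\pm A^{\pm 2}$ cancel in pairs, and what survives is exactly $-\bigl(p + x_3^2 A^{4k+2} + x_3^{-2}A^{-4k-2}\bigr)$.

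The conceptual part of the proof is concentrated in Steps 2 and 3; the main obstacle is just bookkeeping, in particular making sure that the commutation relation is used in a way that the eventual coefficient of $\rho(X_2 X_1)$ is proportional to $\lambda_k$, so that the identity $\rho(X_3)v_k = \lambda_k v_k$ can fold the remaining quadratic-in-generators term into the central element $P$. Once this alignment is spotted, everything else is forced.
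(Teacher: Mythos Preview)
Your proof is correct and follows essentially the same route as the paper: expand $\mathcal{D}_{k+1}\mathcal{U}_k$, use the commutation relation $AX_1X_2 - A^{-1}X_2X_1 = (A^2-A^{-2})X_3$ together with $\rho(X_3)v_k=\lambda_k v_k$ to reduce to an expression involving $A\rho(X_1X_2X_3)$, $A^2\rho(X_1)^2$, $A^{-2}\rho(X_2)^2$, and then substitute the puncture formula for $P$. The only cosmetic difference is ordering: the paper adds $pv_k$ at the outset and applies the commutation relation once, whereas you apply it twice (first to merge the cross-terms, then to pass from $X_2X_1X_3$ back to $X_1X_2X_3$), but the content is identical.
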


\begin{proof} We begin by expanding
\begin{align*}
\mathcal{D}_{k+1} \mathcal{U}_k v_k &= \bigl (A \rho(X_1) - x_3^{-1} A^{-2k-2} \rho(X_2) \bigr) \bigl(A \rho(X_1) - x_3 A^{2k} \rho(X_2) \bigr)v_k \\
&= \bigl(A^2 \rho(X_1^2) - x_3^{-1} A^{-2k-1} \rho(X_2 X_1) - x_3 A^{2k+1} \rho(X_1 X_2) + A^{-2} \rho(X_2^2) \bigr)v_k.
\end{align*}

The puncture invariant $p$ is defined by the property that $\rho(P)=p \Id_V$ for the puncture element 
$$
P=A X_1 X_2 X_3 - A^2 X_1^2 - A^{-2}X_2^2 - A^2 X_3^2 +  A^2 + A^{-2}.
$$
Therefore, using the fact that $\rho(X_3)v_k = \lambda_k v_k = (x_3A^{2k} + x_3^{-1}A^{-2k})v_k$ and the relation that $AX_1X_2 - A^{-1}X_2X_1 = (A^2 - A^{-2})X_3$, 
\begin{align*}
\mathcal{D}_{k+1} \mathcal{U}_k v_k + p v_k & = A^2 \rho(X_1^2)v_k - x_3^{-1} A^{-2k-1} \rho(X_2 X_1)v_k - x_3 A^{2k+1} \rho(X_1 X_2) v_k\\
& \qquad \qquad + A^{-2} \rho(X_2^2)v_k +  A \rho(X_1 X_2 X_3)v_k - A^2 \rho(X_1^2)v_k \\
& \qquad \qquad - A^{-2}\rho(X_2^2)v_k  - A^2 \rho(X_3^2)v_k +  (A^2 + A^{-2})v_k  \\
& =-(  x_3^2 A^{4k+2} + x_3^{-2} A^{-4k-2} )v_k 
\end{align*}
after simplifications. This concludes the proof.
\end{proof} 

\begin{lem}
\label{lem:ProductUpsAndDowns}
Consider the map
$$
\prod_{j=1}^N\mathcal{D}_{k+j} \prod_{j=1}^N\mathcal{U}_{k+N-j} =  \mathcal D_{k+1}\mathcal D_{k+2} \dots \mathcal D_{k+N-1}\mathcal D_{k+N} \mathcal U_{k+N-1}\mathcal U_{k+N-2} \dots \mathcal U_{k+1}\mathcal U_{k}.
$$
For every $v_k\in V_k$,
 $$
\prod_{j=1}^N\mathcal{D}_{k+j} \prod_{j=1}^N\mathcal{U}_{k+N-j} v_k = ( t_1 t_2 t_3 + t_1^2+  t_2^2 )  v_k
$$
\end{lem}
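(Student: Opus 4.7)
My plan is to collapse the length-$2N$ composition to a scalar multiple of $v_k$ by iteratively applying Lemma~\ref{lem:ComposeUpAndDown} from the center outward, and then to evaluate the resulting scalar via Lemma~\ref{lem:chebyshev}(b) combined with Lemma~\ref{lem:RelationsPunctureChebyshev}. The innermost adjacent pair is $\mathcal{D}_{k+N}\mathcal{U}_{k+N-1}$, and the vector it acts on, namely $\mathcal{U}_{k+N-2}\cdots\mathcal{U}_k v_k$, lies in $V_{k+N-1}$ by Lemma~\ref{lem:UpDown}. Applying Lemma~\ref{lem:ComposeUpAndDown} with index $k+N-1$ therefore replaces that pair by the scalar $-\bigl(p + x_3^2 A^{4(k+N-1)+2} + x_3^{-2} A^{-4(k+N-1)-2}\bigr)$. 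The resulting vector then lies in $V_{k+N-2}$, where the same lemma collapses $\mathcal{D}_{k+N-1}\mathcal{U}_{k+N-2}$, and iterating $N$ times reduces the whole composition to
$$
\prod_{m=0}^{N-1}\Bigl(-\bigl(p + x_3^2 A^{4(k+m)+2} + x_3^{-2} A^{-4(k+m)-2}\bigr)\Bigr)\,v_k.
$$

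Next I would recognize this as a value of the factorization in Lemma~\ref{lem:chebyshev}(b). Because $N$ is odd, $\gcd(2,N)=1$, so $A^{4m}=(A^2)^{2m}$ ranges over all $N$-th roots of unity as $m=0,\dots,N-1$. Setting $a := x_3^2 A^{4k+2}$, each factor equals $-\bigl((-p) - a A^{2\ell(m)} - a^{-1} A^{-2\ell(m)}\bigr)$, where $\ell(m)$ is any integer congruent to $2m$ modulo $N$, and Lemma~\ref{lem:chebyshev}(b) applied at $x=-p$ then gives $\prod_m \bigl(p + x_3^2 A^{4(k+m)+2} + x_3^{-2} A^{-4(k+m)-2}\bigr) = -\bigl(T_N(-p) - a^N - a^{-N}\bigr)$ after the sign $(-1)^N = -1$ is extracted. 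Combining this with the outer $(-1)^N$ from the peeling step cancels the two signs, so the scalar acting on $v_k$ is $T_N(-p) - a^N - a^{-N}$. I would then simplify using: (i) $a^N = x_3^{2N}$ and $a^{-N} = x_3^{-2N}$, because $A^{(4k+2)N} = (-1)^{4k+2} = 1$; (ii) $T_N(-p) = -T_N(p)$, since $T_N$ is an odd polynomial for odd $N$; (iii) $T_N(p) = -t_1 t_2 t_3 - t_1^2 - t_2^2 - t_3^2 + 2$ from Lemma~\ref{lem:RelationsPunctureChebyshev}; and (iv) $x_3^{2N} + x_3^{-2N} = (x_3^N + x_3^{-N})^2 - 2 = t_3^2 - 2$. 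Substituting, the $t_3^2$ contributions from (iii) and (iv) cancel, as do the constants $\pm 2$, leaving exactly the claimed value $t_1 t_2 t_3 + t_1^2 + t_2^2$.

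The main obstacle is bookkeeping: one must correctly identify the bijection $m \mapsto \ell(m)$ that puts the product into the form required by Lemma~\ref{lem:chebyshev}(b), and track the cumulative signs coming from $(-1)^N$ and from the odd parity of $T_N(x)$. Conceptually, no new identity is needed beyond those already established in the earlier lemmas.
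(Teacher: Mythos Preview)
Your proof is correct and follows essentially the same route as the paper: collapse the $2N$ operators in pairs via Lemma~\ref{lem:ComposeUpAndDown}, then evaluate the resulting product using Lemma~\ref{lem:chebyshev}(b) and Lemma~\ref{lem:RelationsPunctureChebyshev}. The only difference is cosmetic: the paper applies Lemma~\ref{lem:chebyshev}(b) with $x=p$ and $a=-x_3^2$ (so that the minus signs are absorbed into $a$ and one obtains $\prod_l (p+x_3^2 A^{2l}+x_3^{-2}A^{-2l}) = T_N(p)+x_3^{2N}+x_3^{-2N}$ directly), whereas you take $a=x_3^2 A^{4k+2}$ and $x=-p$, which forces you to extract an extra $(-1)^N$ and then invoke the odd parity of $T_N$; both choices lead to the same cancellation.
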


\begin{proof} Note that $\mathcal U_{k+j-1}\mathcal U_{k+j-2} \dots \mathcal U_{k+1}\mathcal U_{k}v_k$ is an element of $V_j$ for every $j$. By successive applications of Lemma~\ref{lem:ComposeUpAndDown}, we conclude that
$$
\prod_{j=1}^N\mathcal{D}_{k+j} \prod_{j=1}^N\mathcal{U}_{k+N-j} v_k = -  \prod_{j=1}^N (p + x_3^2 A^{4k+4j+2} + x_3^{-2} A^{-4k-4j-2}) v_k.
$$
Because $A^2$ and $A^4$ are primitive $N$-roots of unity, the set of powers $A^{4k+4j}$ with $j=1$, $2$, \dots, $N$ is the same as the set of powers $A^{2l}$ with $l=1$, $2$, \dots, $N$. Therefore
\begin{align*}
 \prod_{j=1}^N (p + x_3^2 A^{4k+4j+2} + x_3^{-2} A^{-4k-4j-2}) 
  &=  \prod_{l=1}^N (p + x_3^2 A^{2l} + x_3^{-2} A^{-2l}) \\
 &= T_N(p) + x_3^{2N} + x_3^{-2N} \\
 &= -t_1 t_2 t_3 -t_1^2 - t_2^2
\end{align*}
by Lemmas~\ref{lem:chebyshev}(b) and \ref{lem:RelationsPunctureChebyshev}, remembering that $t_3 = x_3^N + x_3^{-N}$. This completes the proof.
\end{proof}

We now use the fact that the representation $\rho \colon \SSS(S_{1,1}) \to \End(V)$ is irreducible. 

\begin{lem}
\label{lem:Ndimensional}
The space $V$ has dimension $N$, and all eigenspaces $V_k$ of $\rho(X_3)$ are $1$-dimensional. 

More precisely, $V$ admits a basis $\{ v_1, v_2, \dots, v_N\}$ where each $v_k$ generates the eigenspace $V_k$, and where for some $u\neq 0$
$$
\mathcal U_k v_k =
\begin{cases}
v_{k+1} &\text{ if } 1\leq k \leq N-1\\
u v_1 &\text{ if } k=N
\end{cases}
$$
and
$$
\mathcal D_k v_k =
\begin{cases}
-\frac1u (p+ x_3^2 A^{2} + x_3^{-2} A^{-2})  v_N &\text{ if } k=1\\
- (p+ x_3^2 A^{4k-2} + x_3^{-2} A^{-4k+2}) v_{k-1} &\text{ if } 2\leq k \leq N.
\end{cases}
$$

\end{lem}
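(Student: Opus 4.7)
The plan is to use the irreducibility of $\rho$ together with the operators $\mathcal U_k$ and $\mathcal D_k$ from Lemmas~\ref{lem:UpDown}--\ref{lem:ProductUpsAndDowns} to show that $V$ is spanned by a single $\mathcal U$-orbit. First I would establish $V = \bigoplus_{k=1}^N V_k$: since $T_N\bigl(\rho(X_3)\bigr) = t_3\Id_V$ and $t_3 = x_3^N + x_3^{-N}$, Lemma~\ref{lem:chebyshev}(b) gives $\prod_{k=1}^N \bigl(\rho(X_3) - \lambda_k\Id_V\bigr) = 0$, and the hypothesis $t_3 \neq \pm 2$ makes the $\lambda_k$ pairwise distinct, so $\rho(X_3)$ is diagonalizable with spectrum contained in $\{\lambda_1,\dots,\lambda_N\}$. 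At least one $V_{k_0}$ is therefore nonzero, and by the re-indexing observation following Lemma~\ref{lem:UpDown} I may assume $k_0 = 1$.

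The next step is to choose $v_1 \in V_1$ carefully. Viewing $U := \mathcal U_N \mathcal U_{N-1} \cdots \mathcal U_1$ and $D := \mathcal D_2 \mathcal D_3 \cdots \mathcal D_N \mathcal D_1$ as endomorphisms of $V_1$, Lemma~\ref{lem:ProductUpsAndDowns} specialized to $k=1$ reads $DU = \alpha\,\Id_{V_1}$ with $\alpha = t_1 t_2 t_3 + t_1^2 + t_2^2 \neq 0$; in particular $U$ is invertible. Since $V_1$ is a nonzero finite-dimensional complex vector space, I pick $v_1 \in V_1 \setminus \{0\}$ to be an eigenvector of $U$ with eigenvalue $u$, and applying $D$ to $Uv_1 = uv_1$ forces $u\alpha v_1^{-1}\cdot v_1 \neq 0$, i.e.\ $u \neq 0$. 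Defining $v_{k+1} := \mathcal U_k v_k$ for $k=1,\dots,N-1$, Lemma~\ref{lem:UpDown} places each $v_k$ in $V_k$, and $\mathcal U_N v_N = uv_1$ holds by construction; moreover no $v_k$ can vanish, since $v_k = 0$ would propagate to $Uv_1 = 0$, contradicting $u \neq 0$.

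I would then show that $W = \mathrm{span}(v_1,\dots,v_N)$ is $\rho$-invariant. By Lemma~\ref{lem:ExpressRhoUpDown}, on each $V_k$ each of $\rho(X_1)$ and $\rho(X_2)$ is a linear combination of $\mathcal U_k$ and $\mathcal D_k$, so invariance reduces to showing the images $\mathcal U_k v_k$ and $\mathcal D_k v_k$ lie in $W$. For $\mathcal U$, one has $\mathcal U_k v_k = v_{k+1}$ for $k<N$ and $\mathcal U_N v_N = uv_1$. For $\mathcal D$, Lemma~\ref{lem:ComposeUpAndDown} gives $\mathcal D_k v_k = \mathcal D_k \mathcal U_{k-1} v_{k-1} = -(p + x_3^2 A^{4k-2} + x_3^{-2} A^{-4k+2}) v_{k-1}$ for $k\geq 2$, while $\mathcal D_1 v_1 = u^{-1}\mathcal D_1 \mathcal U_N v_N = -u^{-1}(p + x_3^2 A^{4N+2} + x_3^{-2} A^{-4N-2}) v_N = -u^{-1}(p + x_3^2 A^2 + x_3^{-2} A^{-2}) v_N$ via $A^{4N}=1$. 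Since the $v_k$ live in distinct eigenspaces of $\rho(X_3)$ they are linearly independent, so irreducibility forces $W = V$, yielding $\dim V = N$ and $V_k = \mathbb C v_k$; the two displayed formulas in the lemma are exactly the $\mathcal D_k v_k$ computations just performed.

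The main obstacle I anticipate is arranging $\mathcal U_N v_N$ to be a scalar multiple of $v_1$, which is what makes $\mathrm{span}(v_1,\dots,v_N)$ stable under the whole algebra. This is precisely why $v_1$ must be chosen as a $U$-eigenvector, and why the genericity hypothesis $t_1 t_2 t_3 + t_1^2 + t_2^2 \neq 0$ together with Lemma~\ref{lem:ProductUpsAndDowns} is essential, as it guarantees both the invertibility of $U$ on $V_1$ and the nonvanishing of the eigenvalue $u$.
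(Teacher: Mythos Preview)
Your proof is correct and follows essentially the same approach as the paper's: pick an eigenvector of the cyclic product $\mathcal U_N\cdots\mathcal U_1$ on a nonzero eigenspace (using Lemma~\ref{lem:ProductUpsAndDowns} and the hypothesis $t_1t_2t_3+t_1^2+t_2^2\neq 0$ to force the eigenvalue $u\neq 0$), propagate by the $\mathcal U_k$ to build $v_1,\dots,v_N$, and invoke Lemmas~\ref{lem:ExpressRhoUpDown} and \ref{lem:ComposeUpAndDown} plus irreducibility to conclude $W=V$. The only cosmetic differences are that you first establish diagonalizability of $\rho(X_3)$ (harmless extra information) and normalize $k_0=1$ via the re-indexing remark rather than by composing with further $\mathcal U$ operators; also, the expression ``$u\alpha v_1^{-1}\cdot v_1\neq 0$'' is garbled---you mean that $DUv_1=\alpha v_1\neq 0$ together with $Uv_1=uv_1$ forces $u\neq 0$.
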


\begin{proof} The operator $\rho(X_3)\in \End(V)$ admits at least one non-zero eigenvector. Therefore, some $V_{k_0}$ is different from 0.

The map $\prod_{j=1}^N\mathcal{U}_{k_0+N-j} = \mathcal U_{k_0+N-1}\mathcal U_{k_0+N-2} \dots \mathcal U_{k_0+1}\mathcal U_{k_0}$ of Lemma~\ref{lem:ProductUpsAndDowns} sends $V_{k_0}$ to $V_{k_0}$. Because  $t_1 t_2 t_3 + (t_1)^2+  (t_2)^2 \neq 0$ by hypothesis in Theorem~\ref{thm:ReconstructPuncTorus}, Lemma~\ref{lem:ProductUpsAndDowns} shows that this map is not the 0 map on $V_{k_0}$. Therefore, there is an eigenvalue $u\neq 0$ and a non-zero eigenvector $v_{k_0} \in V_{k_0}$ such that
$$
 \mathcal U_{k_0+N-1}\mathcal U_{k_0+N-2} \dots \mathcal U_{k_0+1}\mathcal U_{k_0} v_{k_0} = uv_{k_0}.
$$

We can now arrange that $k_0=1$. Indeed, if we apply  $\mathcal U_{N}\mathcal U_{N-1} \dots \mathcal U_{k_0+1}\mathcal U_{k_0}  $ to both sides of the above equality and if we set $v_1 = \mathcal U_{N}\mathcal U_{N-1} \dots \mathcal U_{k_0+1}\mathcal U_{k_0} v_{k_0} \in V_{N+1} = V_1$, we obtain that
$$
 \mathcal U_{N}\mathcal U_{N-1} \dots \mathcal U_{2}\mathcal U_{1} v_{1} = uv_{1}.
$$

Then, set $v_k =  \mathcal U_{k-1}\mathcal U_{k-2} \dots \mathcal U_{2}\mathcal U_{1} v_{1} \in V_k$ for every $k=1$, $2$, \dots, $N$. Let $W$ be the $N$-dimensional linear subspace of $V$ spanned by the vectors $v_1$, $v_2$, \dots, $v_N$.

By construction, 
$$
\mathcal U_k v_k =
\begin{cases}
v_{k+1} &\text{ if } 1\leq k \leq N-1\\
u v_1 &\text{ if } k=N.
\end{cases}
$$
Also, by Lemma~\ref{lem:ComposeUpAndDown},
$$
\mathcal D_k v_k =
\begin{cases}
-\frac1u (p+ x_3^2 A^{2} + x_3^{-2} A^{-2})  v_N &\text{ if } k=1\\
- (p+ x_3^2 A^{4k-2} + x_3^{-2} A^{-4k+2}) v_{k-1} &\text{ if } 2\leq k \leq N.
\end{cases}
$$

An immediate consequence of these formulas is that, for every $k$, $\mathcal U_k v_k$ and $\mathcal D_k v_k $ both belong to $W$. Lemma~\ref{lem:ExpressRhoUpDown} then shows that $\rho(X_1)v_k$,  $\rho(X_2)v_k$ and $\rho(X_3)v_k$ belong to $W$.  This proves that $W$ is invariant under the image of the representation $\rho \colon \SSS(S_{1,1}) \to \End(V)$. By irreducibility of $\rho$, it follows that $W=V$. Therefore, $V$ has dimension $N$, and each eigenspace $V_k$ of $\rho(X_3)$ is the line generated by the vector $v_k$. 
\end{proof}

Combining the above formulas for $\mathcal U_k v_k$ and $\mathcal D_k v_k$ with Lemma~\ref{lem:ExpressRhoUpDown} completely determines $\rho(X_1)$ and $\rho(X_2)$ in terms of the parameters $p$, $x_3$ and $u$. Here, $p$ and $x_3$ are given by the data of Theorem~\ref{thm:ReconstructPuncTorus}. We now determine $u$ in terms of the rest of this data. 

\begin{lem}
\label{lem:ComputeU}
$$
u =  -t_1 - x_3^N t_2.
$$
\end{lem}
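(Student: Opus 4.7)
The plan is to extract the value of $u$ from the identities $T_N(\rho(X_1))=t_1\Id_V$ and $T_N(\rho(X_2))=t_2\Id_V$ of Theorem~\ref{thm:InvariantsExist}, by evaluating their $(v_1,v_1)$-matrix entries in the basis produced by Lemma~\ref{lem:Ndimensional}. Combining Lemma~\ref{lem:ExpressRhoUpDown} with the explicit formulas for $\mathcal U_k v_k$ and $\mathcal D_k v_k$ from Lemma~\ref{lem:Ndimensional}, one sees that $\rho(X_1)$ and $\rho(X_2)$ act as \emph{cyclic tridiagonal} operators in that basis: $\rho(X_1)v_k=\alpha_k v_{k+1}+\beta_k v_{k-1}$ and $\rho(X_2)v_k=\gamma_k v_{k+1}+\delta_k v_{k-1}$ (indices mod $N$), with vanishing diagonal entries and explicit formulas for $\alpha_k,\beta_k,\gamma_k,\delta_k$. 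The unknown $u$ enters linearly in $\alpha_N$ and $\gamma_N$ and as $u^{-1}$ in $\beta_1$ and $\delta_1$.

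The $(v_1,v_1)$-entry of $\rho(X_i)^m$ is a sum over closed walks of length $m$ on the cycle $C_N$ starting at vertex $1$, each weighted by a product of off-diagonal entries. Every step shifts the vertex by $\pm 1\pmod N$, so the total shift of a closed walk must be $0\pmod N$; but its parity equals $m\pmod 2$. Consequently, for odd $1\leq m<N$ no closed walk exists, while for $m=N$ (odd) the only contributors are the two unidirectional $N$-cycles, giving $(\rho(X_1)^N v_1)_{v_1}=\prod_{k=1}^N\alpha_k+\prod_{k=1}^N\beta_k$ and analogously for $\rho(X_2)$. Because $T_N(x)$ for $N$ odd contains only odd powers with leading coefficient $1$, the identities $T_N(\rho(X_i))v_1=t_i v_1$ reduce on their $v_1$-component to the two scalar equations
$$\prod_{k=1}^N\alpha_k+\prod_{k=1}^N\beta_k=t_1 \qquad\text{and}\qquad \prod_{k=1}^N\gamma_k+\prod_{k=1}^N\delta_k=t_2.$$

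The products are then computed explicitly using $A^{2N}=1$, $A^N=-1$, the evaluation $\prod_{k=1}^N(x_3A^{2k}-x_3^{-1}A^{-2k})=x_3^N-x_3^{-N}$ (a direct consequence of Lemma~\ref{lem:chebyshev}(b)), and the evaluation $\prod_{k=1}^N(p+x_3^2A^{4k-2}+x_3^{-2}A^{-4k+2})=-t_1t_2t_3-t_1^2-t_2^2$ already invoked in the proof of Lemma~\ref{lem:ProductUpsAndDowns}. The two scalar equations thereby become two quadratic equations in the unknown $u$.

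Subtracting these two quadratics produces a linear equation in $u$ whose coefficient factors as $(x_3^{2N}-1)(t_1+x_3^{-N}t_2)$. The factor $x_3^{2N}-1$ is nonzero because $t_3\neq\pm 2$ forces $x_3^N\neq\pm1$; and if $t_1+x_3^{-N}t_2$ vanished then the identity $(t_1+x_3^N t_2)(t_1+x_3^{-N}t_2)=t_1^2+t_2^2+t_1t_2t_3$ would force $t_1t_2t_3+t_1^2+t_2^2=0$, contradicting the hypothesis of Theorem~\ref{thm:ReconstructPuncTorus}. Solving the linear equation therefore gives $u=-t_1-x_3^N t_2$ uniquely. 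The main technical obstacle is the bookkeeping of $A$-exponent sums and sign patterns appearing in the products $\prod\alpha_k$, $\prod\beta_k$, $\prod\gamma_k$, $\prod\delta_k$, which is routine but lengthy, following a pattern parallel to the proof of Lemma~\ref{lem:ProductUpsAndDowns}.
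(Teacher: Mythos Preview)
Your approach is essentially the same as the paper's. The paper also decomposes $\rho(X_1)=\mathcal U+\mathcal D$ (your $\alpha_k$, $\beta_k$ are exactly the coefficients of $\mathcal U$, $\mathcal D$ in Lemma~\ref{lem:ExpressRhoUpDown}), and uses the same eigenspace-shifting/parity argument---your ``closed walks on $C_N$'' language is an equivalent rephrasing of the paper's observation that a monomial $A_1\cdots A_m$ in $\mathcal U,\mathcal D$ sends $V_k$ to $V_{k+2n-m}$, forcing $(m,n)=(N,0)$ or $(N,N)$---to conclude $T_N(\rho(X_1))=\mathcal U^N+\mathcal D^N$. The only noteworthy difference is in the final elimination: the paper chooses the specific linear combination $t_1+x_3^N t_2$, which makes the $u^{-1}$ terms cancel on the nose and yields $u$ directly, whereas your elimination of the $u^2$ term leaves a linear equation whose coefficient you must check is nonzero. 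Your nonvanishing argument via the factorization $(t_1+x_3^N t_2)(t_1+x_3^{-N}t_2)=t_1^2+t_2^2+t_1t_2t_3$ is correct and is a nice observation the paper does not make explicit; but the paper's route avoids this extra step entirely.
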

\begin{proof} 
We need to relate the number $u$ to the numbers $t_i$ such that $T_N \bigl( \rho (X_i) \bigr) = t_i \Id_V$. 

By Lemma~\ref{lem:ExpressRhoUpDown}, $\rho(X_1)$ can be decomposed as $\rho(X_1) = \mathcal U + \mathcal D$ where, in the basis $\{v_1, v_2, \dots, v_N\}$ provided by Lemma~\ref{lem:Ndimensional}, 
\begin{align*}
\mathcal{U} v_k & = -\frac{ x_3^{-1} A^{-2k-1}}{x_3 A^{2k} - x_3^{-1} A^{-2k}} \mathcal{U}_k v_k \\
\mathcal{D} v_k & = \frac{ x_3 A^{2k-1}}{x_3 A^{2k} - x_3^{-1} A^{-2k}} \mathcal{D}_k v_k.
\end{align*}
 The crucial property is that $\mathcal{U}$ and $\mathcal{D}$ cyclically permute the eigenspaces $V_k$ in opposite directions, namely $\mathcal{U}$ sends each $V_k$ to $V_{k+1}$ while $\mathcal{D}$ sends $V_k$ to $V_{k-1}$.
 
 Expanding $T_N \bigl(\rho(X_1) \bigr)=T_N(\mathcal{U}+\mathcal{D})$ gives a linear combination of terms of the form $A_1 A_2 \dots A_m$, where each $A_i$ is either $\mathcal{U}$ or $\mathcal{D}$, and where $m\leq N$ is odd. If such a monomial contains $n$ terms that are $\mathcal{U}$ and $m-n$ terms that are $\mathcal{D}$,  it sends $V_k$ to $V_{k+2n-m}$. Therefore, since $T_N\bigl(\rho(X_1)\bigr) = t_1 \Id_V$, this expansion of $T_N(\mathcal{U}+\mathcal{D})$ contains only terms such that  $2n-m = 0 \mod N$.  Since $m$ is odd and $0\le n \le m\le N$, we only have  two possibilities: $(m,n) = (N,0)$ or $(m,n) = (N,N)$. Since the highest degree term of $T_N(x)$ is $x^N$, this proves that 
 $$
 T_N \bigl(\rho(X_1) \bigr)=T_N(\mathcal{U}+\mathcal{D})= \mathcal U^N + \mathcal D^N.
 $$
 
 As $A^2$ is a primitive $N$-root of unity, $\prod_{k=1}^N (x_3 A^{2k} - x_3^{-1} A^{-2k})= x_3^{N}-x_3^{-N}$. It follows that
\begin{align*}
\mathcal U^N v_k &= \frac{x_3^{-N}}{x_3^{N}-x_3^{-N}} \mathcal U_{k+N-1}\mathcal U_{k+N-2} \dots \mathcal U_{k+1}\mathcal U_{k} v_k\\
&= \frac{ux_3^{-N}}{x_3^{N}-x_3^{-N}}  v_k
\end{align*}
using the expression for $\mathcal U_l v_l$ given by Lemma~\ref{lem:Ndimensional}. 

Similarly,
\begin{align*}
\mathcal D^N v_k &= -\frac{x_3^{N}}{x_3^{N}-x_3^{-N}} \mathcal D_{k-N+1}\mathcal D_{k-N+2} \dots \mathcal D_{k-1}\mathcal D_{k} v_k\\
&= +\frac{x_3^{N}}{x_3^{N}-x_3^{-N}}  \frac1u  \prod_{j=1}^N (p+ x_3^2 A^{4k-4j+2} + x_3^{-2} A^{-4k+4j-2}) v_k\\
&=  - \frac{u^{-1} x_3^{N}}{x_3^{N}-x_3^{-N}} ( t_1 t_2 t_3 + t_1^2+  t_2^2 ) v_k
\end{align*}
where the last equality is proved by the same computation as in the proof of Lemma~\ref{lem:ProductUpsAndDowns}. 

Comparing the equalities $T_N\bigl(\rho(X_1)\bigr) = t_1 \Id_V$ and $ T_N \bigl(\rho(X_1) \bigr)= \mathcal U^N + \mathcal D^N$, it follows that 
$$
t_1 = - \frac{ux_3^{-N}}{x_3^{N}-x_3^{-N}} - \frac{u^{-1}x_3^{N}}{x_3^{N}-x_3^{-N}} ( t_1 t_2 t_3 + t_1^2+  t_2^2 ).
$$

This almost determines $u$, up to two possibilities. To resolve the ambiguity, we perform similar computations for $T_N\bigl(\rho(X_2)\bigr) = t_2 \Id_V$, which give 
$$
t_2 =  \frac{u}{x_3^{N}-x_3^{-N}} + \frac{u^{-1}}{x_3^{N}-x_3^{-N}} ( t_1 t_2 t_3 + t_1^2+  t_2^2 ).
$$

Combining these expressions for $t_1$ and $t_2$ shows that $u = -t_1 - t_2 x_3^N$. 
\end{proof}

We now just need to combine Lemmas~\ref{lem:ExpressRhoUpDown}, \ref{lem:Ndimensional} and \ref{lem:ComputeU} to obtain: 

\begin{lem}
\label{lem:PunctTorusExplicitFormula}
\pushQED{\qed}
\begin{align*}
\rho(X_1) v_k &=
\begin{cases}
 -\frac{ x_3^{-1} A^{-3}}{x_3 A^{2} - x_3^{-1} A^{-2}} v_{2} - \frac{ x_3 A(p+ x_3^2 A^{2} + x_3^{-2} A^{-2})}{(x_3 A^{2} - x_3^{-1} A^{-2})(t_1 + t_2 x_3^N)}    v_N &\text{ if } k=1
 \\
 -\frac{ x_3^{-1} A^{-2k-1}}{x_3 A^{2k} - x_3^{-1} A^{-2k}} v_{k+1}  + \frac{ x_3 A^{2k-1}(p+ x_3^2 A^{4k-2} + x_3^{-2} A^{-4k+2})}{x_3 A^{2k} - x_3^{-1} A^{-2k}}v_{k-1} &\text{ if } 2\leq k \leq N-1
 \\
 \frac{ x_3^{-1} A^{-1}(t_1 + t_2 x_3^N)}{x_3  - x_3^{-1} }v_1 + \frac{ x_3 A^{-1}(p+ x_3^2 A^{-2} + x_3^{-2} A^{2})}{x_3  - x_3^{-1} }v_{k-1}  &\text{ if } k=N\\
\end{cases}
 \\
\rho(X_2) v_k &=
\begin{cases}
 -\frac{ 1}{x_3 A^{2} - x_3^{-1} A^{-2}} v_{2} - \frac{ p+ x_3^2 A^{2} + x_3^{-2} A^{-2}}{(x_3 A^{2} - x_3^{-1} A^{-2})(t_1 + t_2 x_3^N)}    v_N &\text{ if } k=1
 \\
 -\frac{ 1}{x_3 A^{2k} - x_3^{-1} A^{-2k}} v_{k+1}  + \frac{ p+ x_3^2 A^{4k-2} + x_3^{-2} A^{-4k+2}}{x_3 A^{2k} - x_3^{-1} A^{-2k}}v_{k-1} &\text{ if } 2\leq k \leq N-1
 \\
 \frac{ t_1 + t_2 x_3^N}{x_3  - x_3^{-1} }v_1 + \frac{ p+ x_3^2 A^{-2} + x_3^{-2} A^{+2}}{x_3  - x_3^{-1} }v_{k-1}  &\text{ if } k=N\\
\end{cases}
\\
\rho(X_3) v_k &= (x_3 A^{2k}+ x_3^{-1} A^{-2k}) v_k. \qedhere
\end{align*}
\end{lem}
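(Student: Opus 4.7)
The plan is to simply combine the three preceding lemmas: Lemma~\ref{lem:ExpressRhoUpDown} already expresses $\rho(X_1)v_k$, $\rho(X_2)v_k$ and $\rho(X_3)v_k$ as explicit linear combinations of $\mathcal U_k v_k$ and $\mathcal D_k v_k$ (with coefficients that involve only $x_3$ and powers of $A$), while Lemma~\ref{lem:Ndimensional} tells us how $\mathcal U_k v_k$ and $\mathcal D_k v_k$ are written in the basis $\{v_1,\dots,v_N\}$ (in terms of $p$, $x_3$ and the scalar $u$). Substituting the latter into the former immediately produces formulas of the required shape, with $u$ and $1/u$ appearing only in the ``boundary'' rows $k=1$ and $k=N$.

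The only care needed is the case split dictated by the shape of the formulas in Lemma~\ref{lem:Ndimensional}. For $2 \leq k \leq N-1$, both $\mathcal U_k v_k = v_{k+1}$ and $\mathcal D_k v_k = -(p+x_3^2 A^{4k-2}+x_3^{-2}A^{-4k+2})v_{k-1}$ are free of $u$, so the substitution into Lemma~\ref{lem:ExpressRhoUpDown} yields the middle line of the stated formula with no further work. For $k=N$ the vector $\mathcal U_N v_N = u v_1$ introduces a factor $u$, and for $k=1$ the vector $\mathcal D_1 v_1 = -\frac{1}{u}(p+x_3^2 A^2 + x_3^{-2}A^{-2})v_N$ introduces a factor $1/u$. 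In each of these two cases one inserts the value $u = -t_1 - t_2 x_3^N$ supplied by Lemma~\ref{lem:ComputeU}, and the prefactors in the lemma's statement emerge after straightforward simplification. The formula for $\rho(X_3)v_k$ is nothing but the definition of $V_k$.

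The only step that is not mechanical is double-checking signs and the exponents of $A$ in the two special rows, since Lemma~\ref{lem:Ndimensional} phrases $\mathcal D_1$ with exponents $\pm 2$ (coming from the wrap-around of the cyclic index), which must be matched against Lemma~\ref{lem:ExpressRhoUpDown} evaluated at $k=1$ and $k=N$. Once this bookkeeping is done, the three displayed cases for $\rho(X_1)v_k$ and $\rho(X_2)v_k$ follow by direct substitution, and there is nothing else to prove.
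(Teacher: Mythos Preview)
Your proposal is correct and follows exactly the paper's own approach: the paper simply states that the formulas are obtained by combining Lemmas~\ref{lem:ExpressRhoUpDown}, \ref{lem:Ndimensional} and \ref{lem:ComputeU}, and your write-up spells out precisely this substitution, including the case split at $k=1$ and $k=N$ where the factor $u=-t_1-t_2x_3^N$ (and its inverse) enter.
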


This proves that, up to isomorphism, the representation $\rho$ is completely determined by the numbers $t_1$, $t_2$, $x_3$ and $p$. Since $x_3$ was chosen as an arbitrary number such that $x_3^N + x_3^{-N}=t_3$, it follows that $\rho$ is completely determined by $t_1$, $t_2$, $t_3$, $p$, namely by its classical shadow $r\in \RR(S_{1,1})$ and its puncture invariant $p$.

This concludes the proof of Theorem~\ref{thm:ReconstructPuncTorus}, and therefore of the Uniqueness Theorem~\ref{thm:MainThmIntro} in the case of the one-puncture torus $S_{1,1}$. \qed

\subsection{A more general statement}

The following statement slightly improves Theorem~\ref{thm:ReconstructPuncTorus}, by providing fewer exceptions.

\begin{thm}
\label{thm:ReconstructPuncTorusBis}

Consider a character $r\in \RR(S_{1,1})$. Setting   $t_i =- \Tr r(X_i)$ for the generators $X_i$ of Proposition~{\upshape \ref{prop:PresentationPuncTorus}}, suppose that at least one of the following conditions fails:
\begin{enumerate}
\item $t_i = \pm2$ for each $i=1$, $2$, $3$;
\item $t_i = 0$ for each $i=1$, $2$, $3$;
\item one trace  $t_i$ is equal to $\pm2$,  the other two  $t_j$ are equal to $\pm \frac2{\sqrt3}\mathrm i$, and the signs are such that $t_1t_2t_3 = -\frac83$. 
\end{enumerate}
Then, up to isomorphism,  there exists a unique representation $\rho \colon \SSS(S_{1,1}) \to \End(V)$ with classical shadow $r$ and puncture invariant $p$ for every $p\in \mathbb C$ with 
$$
T_N(p) = - t_1 t_2 t_3 - t_1^2 - t_2^2 - t_3^2 + 2.
$$
\end{thm}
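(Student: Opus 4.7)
The plan is to exploit the cyclic symmetry of the presentation in Proposition \ref{prop:PresentationPuncTorus}. The three defining relations are invariant under the cyclic substitution $X_1 \mapsto X_2 \mapsto X_3 \mapsto X_1$, so this substitution extends to an algebra automorphism $\phi$ of $\SSS(S_{1,1})$. Geometrically, $\phi$ is induced by a mapping class of $S_{1,1}$ that fixes the puncture, and therefore fixes the central element $P$. This can also be verified algebraically: using the skein relation $X_3 X_1 = A^{-2} X_1 X_3 + A^{-1}(A^2 - A^{-2}) X_2$ together with its cyclic companions, one computes that $X_2 X_3 X_1 - X_1 X_2 X_3 = A^{-1}(A^2 - A^{-2})(X_2^2 - X_3^2)$, from which the identity $\phi(P) = P$ follows directly.

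Consequently, for any irreducible representation $\rho$ with classical shadow encoded by $(t_1, t_2, t_3)$ and puncture invariant $p$, the precomposition $\rho \circ \phi^k$ is an irreducible representation with cyclically permuted traces $(t_{1+k}, t_{2+k}, t_{3+k})$ (indices modulo $3$) and with the same puncture invariant $p$. Uniqueness of $\rho$ up to isomorphism therefore follows from Theorem \ref{thm:ReconstructPuncTorus} whenever the hypotheses of that theorem are satisfied by \emph{some} cyclic permutation of $(t_1, t_2, t_3)$, not necessarily the given one. Existence of a representation with the prescribed invariants is provided, for each $p$ with $T_N(p) = -t_1 t_2 t_3 - t_1^2 - t_2^2 - t_3^2 + 2$, by \cite{BonWon2, BonWon3}.

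It thus remains to identify the triples $(t_1, t_2, t_3)$ for which the hypothesis ``$t_i \neq \pm 2$ and $t_1 t_2 t_3 + t_j^2 + t_k^2 \neq 0$ with $\{i,j,k\} = \{1,2,3\}$'' fails for every cyclic choice of $i$, and to check that these are precisely the triples in (1), (2), (3). This is an elementary case analysis. If every $t_i = \pm 2$, we are in case (1). If no $t_i$ equals $\pm 2$, then all three cubic equations $t_1 t_2 t_3 + t_j^2 + t_k^2 = 0$ must hold; pairwise subtraction forces $t_1^2 = t_2^2 = t_3^2$, and substituting into one equation then yields either $t_1 = t_2 = t_3 = 0$ (case (2)) or $t_i^2 = 4$, a contradiction. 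If exactly one $t_i = \pm 2$, the two remaining cubic equations jointly impose $t_j^2 = t_k^2 = -4/3$ with signs arranged so that $t_1 t_2 t_3 = -8/3$, producing case (3). The subcase of exactly two $t_i = \pm 2$ is impossible, since the third cyclic equation would then force the remaining $t_i$ to equal $\pm 2$ as well. The main conceptual step is recognizing the cyclic symmetry $\phi$ and verifying it descends through the asymmetric coefficients $(-A^2, -A^{-2}, -A^2)$ appearing in $P$; once this is in place, the rest of the argument is short and direct.
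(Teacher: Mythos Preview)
Your proof is correct and follows essentially the same approach as the paper: exploit the cyclic symmetry of the presentation to reduce to Theorem~\ref{thm:ReconstructPuncTorus} after a suitable permutation of the $X_i$, with existence coming from \cite{BonWon2, BonWon3}. Your write-up is in fact more thorough than the paper's, which simply asserts ``an easy case-by-case analysis'' and does not spell out the verification that the cyclic substitution fixes $P$.
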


\begin{proof}
An easy  case-by-case analysis show that, if the hypotheses of Theorem~\ref{thm:ReconstructPuncTorus} hold, it is possible to cyclically reindex the curves $X_1$, $X_2$, $X_3$ (which does not change their relations) so that $t_3\neq \pm2$ and $t_1t_2t_3 + t_1^2 +t_2^2 \neq 0$, namely so that the hypotheses of Theorem~\ref{thm:ReconstructPuncTorus} are satisfied. Theorem~\ref{thm:ReconstructPuncTorus} then proves the uniqueness of the representation $\rho$.

To prove the existence, one can rely on \cite{BonWon2, BonWon3} or check by brute force computation that the operators explicitly given in  Lemma~\ref{lem:PunctTorusExplicitFormula} really satisfy the relations of Proposition~\ref{prop:PresentationPuncTorus}. 
\end{proof}

\section{The unpunctured torus $S_{1,0}$}
\label{sect:NoPuncTorus}

For the unpunctured torus $S_{1,0}$, Bullock and Przytycki show that the skein algebra $\SSS(S_{1,0})$ admits a presentation with the same generators and relations as $\SSS(S_{1,1})$ in Proposition~\ref{prop:PresentationPuncTorus}, but with the additional relation that $P+A^2+A^{-2}=0$. As a consequence, an irreducible representation $\rho \colon \SSS(S_{1,0}) \to \End (V)$  is equivalent to the data of an irreducible representation $\rho \colon \SSS(S_{1,0}) \to \End (V)$  with puncture invariant $p=-A^2 -A^{-2}$. The Generic Uniqueness Theorem~\ref{thm:MainThmIntro} then follows from the case of the one-puncture torus, as proved by Theorem~\ref{thm:ReconstructPuncTorus}.  

\section{The four-puncture sphere $S_{0,4}$}
\label{sect:4PuncSphere}

\subsection{A presentation for the skein algebra $\SSS(S_{0,4})$} A presentation of the skein algebra $\SSS(S_{0,4})$ of the four-puncture sphere $S_{0,4}$ can again be found in Bullock-Przytycki \cite{bullockp}.

\begin{figure}[htbp]

\SetLabels
( .21* .75) $X_1 $ \\
( .78* .75) $X_2 $ \\
( .15*.3 ) $ X_3$ \\
(.7 * .34) $P _1$ \\
( .87*.34 ) $P _3$ \\
( .7* .1) $ P_0$ \\
(.87 *.1 ) $ P_2$ \\
\endSetLabels
\centerline{\AffixLabels{\includegraphics{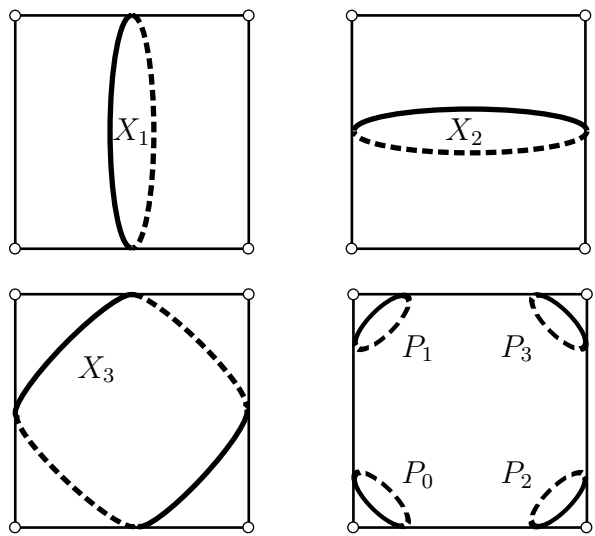}}}

\caption{Curves on the four-puncture sphere}
\label{fig:FourPuncSphere}
\end{figure}

Consider the sphere as a ``pillowcase'' obtained from a rectangle $[0,2] \times [0,1]$ by identifying each point $(0,y)$ to $(1,y)$, each $(x, 0)$ to $(2-x, 0)$, and each $(x, 1)$ to $(2-x, 1)$. Identify the four-puncture sphere $S_{0,4}$ to the surface obtained from this pillowcase by removing the four points that are the images of the six points of $[0,2] \times [0,1]$ with integer coordinates. 

This enables us to single out several simple closed curves in $S_{0,4}$. The first curve $X_1$ in the image of the two vertical arcs $\{ \frac12, \frac32 \} \times [0,1]$. The second curve $X_3$ is the image of the horizontal line segment $[0,1] \times \{ \frac12\}$. The third curve is the image of three slope 1 segments, respectively parametrized by $t\mapsto (\frac12t , \frac12t + \frac12)$, $t\mapsto (t+\frac12 , t )$ and $t\mapsto (\frac12t + \frac32, \frac12t)$ for $0\leq t\leq1$.

We also consider small loops $P_0$, $P_1$, $P_2$, $P_3$ going around the punctures, indexed in such a way that for $i=1$, $2$, $3$, the closed curve $X_i$ separates $P_0$ and $P_i$ from the other two puncture loops. See Figure~\ref{fig:FourPuncSphere}. 

We consider the elements $X_1$, $X_2$, $X_3$, $P_0$, $P_1$, $P_2$, $P_3 \in \SSS(S_{0,4})$ represented by these simple closed curves, endowed with the vertical framing. 

\begin{prop}[\cite{bullockp}]
\label{prop:PresentationFourPuncSphere}
The skein algebra $\SSS(S_{0,4})$ of the four-puncture sphere admits a presentation with generators $X_1$, $X_2$, $X_3$, $P_0$, $P_1$, $P_2$, $P_3$ as above, and with the following relations:
\begin{enumerate}
\item the $P_i$ are central;
\item $ A^2 X_1 X_{2}-A^{-2} X_{2} X_{1} =(A^4-A^{-4})X_{3}+(A^2-A^{-2})(P_0 P_3+P_1 P_2)$
 \item $   A^2 X_2 X_{3}-A^{-2} X_{3} X_{2} =(A^4-A^{-4})X_{1}+(A^2-A^{-2})(P_0 P_1+P_2 P_3) $
\item $    A^2 X_3 X_{1}-A^{-2} X_{1} X_{3} =(A^4-A^{-4})X_{2}+(A^2-A^{-2})(P_0 P_2+P_1 P_3) $
\item
$
 A^2 X_1 X_2 X_3 - A^4 X_1^2 - A^{-4}X_2^2 - A^4 X_3^2 - A^2 (P_0 P_1+P_2 P_3) X_1 -A^{-2}(P_0 P_2+P_1 P_3)X_2 
    - A^2(P_0 P_3+P_1 P_2) X_3 + (A^2 + A^{-2})^2 
  =P_0 P_1 P_2 P_3 +P_0^2+P_1^2+ P_2^2+P_3^2$. 

\end{enumerate}
\end{prop}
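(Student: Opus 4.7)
The plan is in three stages: verify the relations, show generation, and check completeness of the presentation. For the first stage, the centrality of each puncture loop $P_i$ is immediate because a small loop around a puncture can be isotoped off any other diagram before stacking. The commutator relations (2)--(4) are local computations: I would draw the two products $X_i X_j$ and $X_j X_i$ in a neighborhood of a separating arc, resolve every crossing using the Kauffman rule $K_1 = A^{-1}K_0 + A K_\infty$, and identify the resulting pictures. The ``parallel'' resolution closes into a framed translate of $X_k$, while the ``transverse'' resolution pinches two pairs of punctures together, producing a sum of the form $P_a P_b + P_c P_d$ whose two terms correspond to the two ways of splitting the four punctures. Assembling the coefficients, and applying the trivial-loop relation to the bubbles that appear, yields the stated identity. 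The cubic relation (5) I would derive similarly by expanding $X_1 X_2 X_3$ and iterating this procedure, or equivalently by deforming the classical Fricke identity for the four-holed sphere from $A=-1$.

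For the second stage, the Przytycki--Sikora theorem gives $\SSS(S_{0,4})$ a basis indexed by isotopy classes of multicurves, each framed vertically. Every non-trivial simple closed curve on $S_{0,4}$ is either a puncture loop $P_i$, or a separating curve that splits the four punctures $2+2$; the three mapping-class-group orbits of such splits are represented by $X_1, X_2, X_3$, and every other separating curve is a Dehn-twist image of one of these. Relation (2) (together with its cyclic permutations (3), (4)) expresses each such twist image as a polynomial in $X_1, X_2, X_3$ and the $P_i$, so the listed elements generate $\SSS(S_{0,4})$ as an algebra.

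The main obstacle is the third stage: showing that the given relations are complete. Let $\mathcal A$ be the abstract algebra with the given presentation, and $\phi \colon \mathcal A \to \SSS(S_{0,4})$ the natural surjection. Since the $P_i$ are central, $\mathcal A$ is an algebra over $R = \mathbb C[P_0, P_1, P_2, P_3]$; relations (2)--(4) allow one to rewrite any word in the $X_i$ as an $R$-linear combination of ordered monomials $X_1^a X_2^b X_3^c$, and relation (5) then reduces the $X_3$-degree to $\epsilon \in \{0,1\}$ at the cost of raising $(a,b)$. This shows that the monomials $X_1^a X_2^b X_3^\epsilon$, with $a, b \geq 0$ and $\epsilon \in \{0,1\}$, span $\mathcal A$ as an $R$-module. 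On the other hand, under $\phi$ these monomials map to distinct elements of the multicurve basis of $\SSS(S_{0,4})$ and are therefore $R$-linearly independent in the target. Hence the spanning set in $\mathcal A$ is already a basis, $\phi$ is injective, and the presentation is complete. The delicate point in this last argument is controlling the induction when applying (5), since the quadratic relations mix $X_3$ back in through the terms $(A^4-A^{-4})X_3$ in (2); I expect a bidegree lex-order on $(c,a+b)$ to suffice, but this is where the bookkeeping is nontrivial.
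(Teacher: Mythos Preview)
The paper does not contain a proof of this proposition: it is stated with the attribution \cite{bullockp} and no argument, exactly as Proposition~\ref{prop:PresentationPuncTorus} for the once-punctured torus is quoted without proof. So there is no ``paper's own proof'' to compare your sketch against; the result is imported wholesale from Bullock--Przytycki.

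As a standalone sketch your outline is in the right spirit (and is close to how Bullock--Przytycki actually argue), but one step is not correct as written. In the completeness stage you assert that the images $\phi(X_1^a X_2^b X_3^\epsilon)$ are ``distinct elements of the multicurve basis'' of $\SSS(S_{0,4})$. They are not: for $a,b\geq 1$ the product $X_1^a X_2^b$ has crossings and, after resolving, becomes a genuine $R$-linear combination of many multicurves, not a single basis element. What you need instead is a leading-term argument---e.g.\ a filtration by geometric intersection number or by total degree---showing that the leading multicurves of the $\phi(X_1^a X_2^b X_3^\epsilon)$ are distinct, hence these images are $R$-linearly independent. That is precisely the ``bookkeeping'' you flag at the end, and it is the substantive content of the Bullock--Przytycki proof; without it the injectivity of $\phi$ is not established. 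Your remark about the lex order on $(c,a+b)$ is relevant for the spanning direction inside $\mathcal A$, but does not by itself give the independence in the target.
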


We will take advantage of the fact that the relations (2--4) are very similar to the relations what we already encountered for the one-puncture torus.

\subsection{Reconstructing an irreducible representations of $\mathcal{S}_A(S_{0,4})$ from its invariants} 

Let $\rho \colon \SSS(S_{0,4}) \to \End(V)$ be  an irreducible representation with classical shadow  $r\in \RR(S_{0,4})$ and puncture invariants $p_0$, $p_1$, $p_2$, $p_3$. As in the proof of Theorem~\ref{thm:ReconstructPuncTorus}, we want to  show that  $\rho$ can be reconstructed from these invariants, provided that they are generic enough.

\begin{thm}
\label{thm:ReconstructPuncSphere}
Let $\rho \colon \mathcal{S}_A(S_{0,4}) \to \text{End}(V)$ be  an irreducible representation with classical shadow  $r\in \RR(S_{0,4})$ and puncture invariants $p_0$, $p_1$, $p_2$. $p_3\in \mathbb C$. Suppose that, 
\begin{align*}
&\Tr r(X_3) \neq \pm2\\
\text{and } &\Tr r(X_3) \neq T_N(r) \text{ for every solution } r \text{ of the equation}\\
&\qquad\qquad\bigl(r^2 +  p_0 p_3 r + p_0^2 + p_3^2-4 \bigr)\bigl(r^2 +  p_1 p_2 r + p_1^2 + p_2^2-4 \bigr) =0.
\end{align*}
Then, up to isomorphism, the representation $\rho \colon \mathcal{S}_A(S_{1,1}) \to \text{End}(V)$ is completely determined by $r$ and $p$. In addition, $V$ has dimension $N$.  
\end{thm}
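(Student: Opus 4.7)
The plan is to adapt the proof of Theorem~\ref{thm:ReconstructPuncTorus} step by step. Relations (2)--(4) of Proposition~\ref{prop:PresentationFourPuncSphere} are formally $A\to A^2$ deformations of the torus relations in Proposition~\ref{prop:PresentationPuncTorus}, together with extra central terms $(A^2-A^{-2})(p_ip_j+p_kp_\ell)\Id_V$ arising from $\rho(P_i)=p_i\Id_V$. Set $t_i=-\Tr r(X_i)$, choose $x_3$ with $x_3^N+x_3^{-N}=t_3$, and label the $N$ candidate eigenvalues of $\rho(X_3)$ as $\lambda_k=x_3 A^{4k}+x_3^{-1}A^{-4k}$ for $k=1,\dots,N$ (pairwise distinct by $t_3\neq\pm 2$), with associated eigenspaces $V_k\subset V$.

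Next I would introduce modified up and down operators
\begin{align*}
\mathcal U_k &= A^2\rho(X_1) - x_3 A^{4k}\rho(X_2) + \gamma_k\Id_V, \\
\mathcal D_k &= A^2\rho(X_1) - x_3^{-1} A^{-4k}\rho(X_2) + \delta_k\Id_V,
\end{align*}
where $\gamma_k,\delta_k$ are the unique rational functions in $x_3$, $A$, and the central sums $p_0p_1+p_2p_3$, $p_0p_2+p_1p_3$ making $\mathcal U_k$ send $V_k$ to $V_{k+1}$ and $\mathcal D_k$ send $V_k$ to $V_{k-1}$; this analog of Lemma~\ref{lem:UpDown} is verified by plugging relations (3) and (4) into $\rho(X_3)\mathcal U_k v_k$ and $\rho(X_3)\mathcal D_k v_k$ and matching coefficients of $\rho(X_1)$, $\rho(X_2)$, and $\Id_V$. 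The analog of Lemma~\ref{lem:ExpressRhoUpDown} then recovers $\rho(X_1)v_k$ and $\rho(X_2)v_k$ from $\mathcal U_k v_k$, $\mathcal D_k v_k$, and $v_k$, with nonzero denominators guaranteed by $t_3\neq\pm 2$. The heart of the argument is the analog of Lemma~\ref{lem:ComposeUpAndDown}: expand $\mathcal D_{k+1}\mathcal U_k v_k$, use relation (5) to substitute for $A^4\rho(X_1)^2+A^{-4}\rho(X_2)^2$ and relation (2) to resolve $\rho(X_2)\rho(X_1)$, then collapse using $\rho(X_3)v_k=\lambda_k v_k$. This computation produces a scalar identity $\mathcal D_{k+1}\mathcal U_k v_k = Q_k\, v_k$ where $Q_k$ is a rational expression in $\mu_k:=x_3 A^{4k+2}+x_3^{-1}A^{-4k-2}$ and the $p_i$, with denominator $\mu_k^2-4$ (coming from $\gamma_k\delta_{k+1}$). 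Taking the cyclic product over $k=1,\dots,N$ and applying Lemma~\ref{lem:chebyshev}(b) together with the identity $\prod_k(\mu_k^2-4)=t_3^2-4$ (which follows from $T_N(\pm 2)=\pm 2$ and $N$ odd) yields the analog of Lemma~\ref{lem:ProductUpsAndDowns},
\[
\prod_{j=1}^N\mathcal D_{k+j}\prod_{j=1}^N\mathcal U_{k+N-j}\,v_k = C\, v_k,
\]
and the structural prediction is that the numerator $\prod_k Q_k$ factors as a product of two Chebyshev-type factors, one pairing $(p_0,p_3)$ and the other $(p_1,p_2)$; writing $p_i=a_i+a_i^{-1}$, the four roots of the hypothesis's quartic in $r$ are precisely $-(a_0 a_3^{\pm 1}+a_0^{-1}a_3^{\mp 1})$ and $-(a_1 a_2^{\pm 1}+a_1^{-1}a_2^{\mp 1})$, so $C\neq 0$ is equivalent to the second condition of the theorem.

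Once $C\neq 0$ is established, the analog of Lemma~\ref{lem:Ndimensional} follows by the very same irreducibility argument, producing a cyclic basis $\{v_1,\dots,v_N\}$ with $\mathcal U_k v_k=v_{k+1}$ for $k<N$ and $\mathcal U_N v_N=uv_1$ for some $u\neq 0$. The final step is the analog of Lemma~\ref{lem:ComputeU}. The new feature here is that $\rho(X_1)$ decomposes in the basis $\{v_k\}$ as $\mathcal U+\mathcal D+\Lambda$ with a nontrivial diagonal piece $\Lambda$ (inherited from the $\gamma_k,\delta_k$ corrections), so the expansion of $T_N(\rho(X_1))$ contains more surviving monomials than the two extremes $\mathcal U^N$ and $\mathcal D^N$ seen in the torus case. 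However, only those two extremes involve $u$ or $u^{-1}$; the other surviving monomials (those with equal numbers of $\mathcal U$ and $\mathcal D$ letters) contribute diagonal operators whose entries are explicit polynomials in $x_3$, $A$, and the $p_i$. Equating $T_N(\rho(X_1))=t_1\Id_V$ and $T_N(\rho(X_2))=t_2\Id_V$ then yields two polynomial equations in $u$ with a unique common solution, expressing $u$ in terms of the remaining data; assembling everything gives explicit matrix coefficients for $\rho$ in the basis $\{v_k\}$ depending only on $r$ and $(p_0,p_1,p_2,p_3)$.

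The main obstacle, as the paper's introduction warns, is the bookkeeping in the third paragraph: verifying that relation (5), once combined with (2)--(4) and the eigenvalue condition, really produces a $Q_k$ whose cyclic product factors exactly as the hypothesis predicts. Substantial cancellation must occur for the simple torus expression $t_1 t_2 t_3+t_1^2+t_2^2$ to be replaced by a product of two quadratics in $t_3$ and the numbers $\tau_i:=T_N(p_i)$. A secondary, milder difficulty is keeping track of the diagonal contribution $\Lambda$ in the $u$-determination: one must isolate the terms of $T_N(\mathcal U+\mathcal D+\Lambda)$ that involve $u$ or $u^{-1}$ from those that do not, and verify that the resulting system in $u$ still has a unique solution.
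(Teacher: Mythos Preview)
Your proposal is correct and follows essentially the same route as the paper: the same modified up/down operators with additive constants (your $\gamma_k,\delta_k$ are the paper's $\beta_k^+,\beta_k^-$), the same use of relation~(5) to obtain $\mathcal D_{k+1}\mathcal U_k v_k = R_k v_k$, the same factorization of $\prod_k R_k$ via Lemma~\ref{lem:chebyshev}(b) into four Chebyshev factors over the denominator $t_3^2-4$, and the same determination of $u$ by isolating the $\mathcal U^N$ and $\mathcal D^N$ contributions in $T_N(\mathcal U+\mathcal D+\Lambda)$ and eliminating $u^{-1}$ between the $t_1$ and $t_2$ equations. The only refinement to note is that your phrase ``equal numbers of $\mathcal U$ and $\mathcal D$ letters'' should be sharpened to the paper's observation that, for $m\le N$, any surviving monomial other than $\mathcal U^N$ or $\mathcal D^N$ crosses the $V_N\leftrightarrow V_1$ boundary equally often in each direction, so its $u$ and $u^{-1}$ factors cancel.
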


Theorem~\ref{thm:ReconstructPuncSphere} proves the Generic Uniqueness Theorem~\ref{thm:MainThmIntro} in the case of the four-puncture sphere. 

The rest of this section is devoted to the proof of Theorem~\ref{thm:ReconstructPuncSphere}. In particular, we henceforth assume that its hypotheses hold. The proof follows the general lines of the argument used for Theorem~\ref{thm:ReconstructPuncTorus}, but most steps will be more complicated. As a consequence, our reconstruction of the representation $\rho$ from its classical shadow $r\in \RR(S_{0,4})$ and its puncture invariants $p_i$ will not be as explicit as in that earlier case.

As in the case of the one-puncture torus, set $t_i =- \Tr r(X_i)$, so that $T_N\bigl( \rho(X_i) \bigr)=t_i \Id_V$. 
Also, in view of the relations of Proposition~\ref{prop:PresentationFourPuncSphere}, it is convenient to introduce 
\begin{align*}
q_1 & = p_0 p_1+p_2 p_3 \\
q_2 & = p_0 p_2+p_1 p_3 \\
q_3 & = p_0 p_3+p_1 p_2 \\
\Delta & = p_0 p_1 p_2 p_3 +p_0^2+p_1^2+ p_2^2+p_3^2 .
\end{align*}

Choose $x_3 \in \mathbb{C}$, such that $t_3 = x_3^N + x_3^{-N}$. Since $T_N\bigl( \rho (X_3) \bigr) = t_3 \Id_V$,  all possible eigenvalues of $\rho(X_3)$ are of the form $\lambda_k=x_3 A^{4k}+ x_3^{-1} A^{-4k}$ for $k=1$, $2$, \dots, $N-1$. These $N$ numbers $\lambda_k$ are distinct by our hypothesis that  $t_3\ne \pm 2$. As before, set  $V_k=\{v\in V; \rho(X_3)v = \lambda_k v \}$. 

The ``up'' and ``down'' operators  $\mathcal{U}_k$ and $\mathcal{D}_k$ are now given by more complicated formulas. 
\begin{align*}
\mathcal{U}_k & =A^2 \rho(X_1)-x_3 A^{4k}\rho(X_2)+\frac{q_2+x_3 A^{4k+2}q_1}{x_3 A^{4k+2}-x_3^{-1} A^{-4k-2}}\id_V\\
& =A^2 \rho(X_1)-x_3 A^{4k}\rho(X_2)+\beta^+_k \id_V\\
\mathcal{D}_k & =A^2 \rho(X_1)-x_3^{-1} A^{-4k}\rho(X_2)+\frac{-q_2-x_3^{-1} A^{-4k+2}q_1}{x_3 A^{4k-2}-x_3^{-1} A^{-4k+2}}\id_V\\
& =A^2 \rho(X_1)-x_3^{-1}A^{-4k}\rho(X_2)+\beta^-_k \id
\end{align*}
if we set $\beta_k^+ = \frac{q_2+x_3 A^{4k+2}q_1}{x_3 A^{4k+2}-x_3^{-1} A^{-4k-2}}$ and $\beta_k^- = \frac{-q_2-x_3^{-1} A^{-4k+2}q_1}{x_3 A^{4k-2}-x_3^{-1} A^{-4k+2}}$ to simplify further computations. 

\begin{lem}
\label{lem:ExpressRhoUpDown4PunctSPhere}
\pushQED{\qed}
For every $v_k \in V_k$, 
\begin{align*}
\rho(X_1) v_k & = 
-\frac{x_3^{-1} A^{-4k-2}}{x_3 A^{4k}-x_3^{-1} A^{-4k}}  \mathcal{U}_k v_k
+\frac{x_3 A^{4k-2}}{x_3 A^{4k}-x_3^{-1} A^{-4k}} \mathcal{D}_k v_k 
 \\
&\qquad\qquad + \frac{ x_3^{-1} A^{-4k-2}\beta_k^+ - x_3 A^{4k-2} \beta_k^- }{x_3 A^{4k}-x_3^{-1} A^{-4k}} v_k  \\
\rho(X_2) v_k & = -\frac{1}{x_3 A^{4k}-x_3^{-1} A^{-4k}} \mathcal{U}_k v_k+  \frac{1}{x_3 A^{4k}-x_3^{-1} A^{-4k}} \mathcal{D}_k v_k  \\
& \qquad \qquad+ \frac{\beta_k^+ - \beta_k^- }{x_3 A^{4k}-x_3^{-1} A^{-4k}} v_k\\
\rho(X_3) v_k &= (x_3 A^{4k} + x_3^{-1} A^{-4k}) v_k. \qedhere
\end{align*}

\end{lem}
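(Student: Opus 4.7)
The plan is to read the lemma as a $2\times 2$ linear system: the definitions of $\mathcal U_k$ and $\mathcal D_k$, applied to a vector $v_k \in V_k$, read
\begin{align*}
\mathcal U_k v_k &= A^2 \rho(X_1) v_k - x_3 A^{4k} \rho(X_2) v_k + \beta_k^+ v_k, \\
\mathcal D_k v_k &= A^2 \rho(X_1) v_k - x_3^{-1} A^{-4k} \rho(X_2) v_k + \beta_k^- v_k,
\end{align*}
so one simply solves for $\rho(X_1) v_k$ and $\rho(X_2) v_k$ in terms of $\mathcal U_k v_k$, $\mathcal D_k v_k$ and the ``spectator'' vector $v_k$. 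The formula for $\rho(X_3) v_k$ is nothing more than the definition of the eigenspace $V_k$.

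Concretely, I would first subtract the two equations above to isolate $\rho(X_2) v_k$, obtaining the second line of the lemma with denominator $x_3 A^{4k} - x_3^{-1} A^{-4k}$. Then, to produce the first line, I would take the combination $x_3^{-1} A^{-4k} \mathcal U_k v_k - x_3 A^{4k} \mathcal D_k v_k$, which cancels the $\rho(X_2) v_k$ contribution and leaves a scalar multiple of $\rho(X_1) v_k$, with residual coefficient $x_3^{-1} A^{-4k+2} - x_3 A^{4k+2}$. Pulling out a factor of $-A^2$ converts this to the announced denominator $x_3 A^{4k} - x_3^{-1} A^{-4k}$, and the $v_k$-terms rearrange into exactly the stated combination $x_3^{-1} A^{-4k-2}\beta_k^+ - x_3 A^{4k-2} \beta_k^-$.

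The only point that requires any argument at all is that this common denominator is nonzero. But $x_3 A^{4k} = x_3^{-1} A^{-4k}$ forces $x_3^{2} = A^{-8k}$, whence $x_3^{2N} = A^{-8kN} = 1$ since $A^{2N}=1$; in that case $x_3^N = \pm 1$ and $t_3 = x_3^N + x_3^{-N} = \pm 2$, contradicting the standing hypothesis $\operatorname{Tr} r(X_3) \ne \pm 2$ of Theorem~\ref{thm:ReconstructPuncSphere}. This is really the same observation that was already used to establish that the $\lambda_k = x_3 A^{4k} + x_3^{-1} A^{-4k}$ are pairwise distinct.

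There is no genuine obstacle here: the lemma is a purely algebraic reformulation of the defining equations for $\mathcal U_k$ and $\mathcal D_k$, and the main opportunity to slip up is bookkeeping of the powers of $A$ and $x_3$ in the coefficients of $v_k$. For this reason the proof may reasonably be presented as a \qed at the end of the statement, as the authors have done.
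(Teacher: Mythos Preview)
Your proposal is correct and matches the paper's approach: the lemma is stated with a \qed and no proof, precisely because it is the linear-algebra inversion you describe, together with the observation that the denominator $x_3 A^{4k}-x_3^{-1}A^{-4k}$ is nonzero under the standing hypothesis $t_3\neq\pm2$. Your bookkeeping of the $A$-- and $x_3$--powers checks out.
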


\begin{lem}
\label{lem:UpDownFourPunctSphere}
The operator $\mathcal U_k$ sends each subspace $V_k$ to $V_{k+1}$, and $\mathcal D_k$ sends $V_k$ to $V_{k-1}$. 
\end{lem}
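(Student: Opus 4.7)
The plan is to mimic the proof of Lemma~\ref{lem:UpDown}, but now keeping track of the extra central terms involving $q_1$, $q_2$, $q_3$ that appear in the relations of Proposition~\ref{prop:PresentationFourPuncSphere}. These extra terms are precisely what the constants $\beta_k^\pm$ are designed to absorb, so the entire content of the lemma is to verify that the chosen $\beta_k^\pm$ do their job.

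Concretely, for $v_k \in V_k$, I would expand
\[
\rho(X_3)\mathcal U_k v_k = A^2\rho(X_3 X_1) v_k - x_3 A^{4k}\rho(X_3 X_2)v_k + \beta_k^+ \rho(X_3)v_k,
\]
and then use relations (3) and (4) of Proposition~\ref{prop:PresentationFourPuncSphere} to replace $\rho(X_3 X_1)$ and $\rho(X_3 X_2)$ by expressions involving $\rho(X_1)\rho(X_3)$, $\rho(X_2)\rho(X_3)$, $\rho(X_1)$, $\rho(X_2)$ and the scalars $q_1$, $q_2$. Applying $\rho(X_3)v_k = \lambda_k v_k$ with $\lambda_k = x_3 A^{4k}+x_3^{-1}A^{-4k}$, the result becomes a linear combination of $\rho(X_1)v_k$, $\rho(X_2)v_k$ and $v_k$.

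The goal is to match this against
\[
\lambda_{k+1}\mathcal U_k v_k = A^2\lambda_{k+1}\rho(X_1)v_k - x_3 A^{4k}\lambda_{k+1}\rho(X_2)v_k + \beta_k^+ \lambda_{k+1} v_k,
\]
with $\lambda_{k+1}=x_3 A^{4k+4}+x_3^{-1}A^{-4k-4}$. The coefficient of $\rho(X_1)v_k$ will be $A^{-2}\lambda_k + x_3 A^{4k+2}(A^4-A^{-4})$, which after simplification equals $A^2\lambda_{k+1}$. Likewise the coefficient of $\rho(X_2)v_k$ reduces to $-x_3 A^{4k}\lambda_{k+1}$. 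These two identities are formally the same as in Lemma~\ref{lem:UpDown}.

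The essential new step, and the place where the particular shape of $\beta_k^+$ enters, is the constant term. Collecting the pieces from $(A^2-A^{-2})q_2$ and $x_3 A^{4k+2}(A^2-A^{-2})q_1$ together with $\beta_k^+\lambda_k$, one needs
\[
(A^2-A^{-2})\bigl(q_2 + x_3 A^{4k+2}q_1\bigr) = \beta_k^+\bigl(\lambda_{k+1}-\lambda_k\bigr).
\]
Using $\lambda_{k+1}-\lambda_k = (A^2-A^{-2})\bigl(x_3 A^{4k+2}-x_3^{-1}A^{-4k-2}\bigr)$, this identity reduces to the definition
\[
\beta_k^+ = \frac{q_2 + x_3 A^{4k+2}q_1}{x_3 A^{4k+2}-x_3^{-1}A^{-4k-2}},
\]
so it holds automatically. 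The analogous computation for $\mathcal D_k$, using relation (3) in the reverse direction together with the definition of $\beta_k^-$, shows that $\mathcal D_k v_k \in V_{k-1}$. The main (and only real) obstacle is the bookkeeping of the extra central terms; once one sees that $\beta_k^\pm$ were designed exactly so that the $q_1, q_2$ corrections and the mismatch between $\lambda_k$ and $\lambda_{k\pm1}$ cancel, no further difficulty arises.
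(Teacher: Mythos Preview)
Your proposal is correct and follows essentially the same approach as the paper's proof: expand $\rho(X_3)\mathcal U_k v_k$, apply relations (3) and (4) of Proposition~\ref{prop:PresentationFourPuncSphere}, use $\rho(X_3)v_k=\lambda_k v_k$, and match against $\lambda_{k+1}\mathcal U_k v_k$. Your organization by coefficients of $\rho(X_1)v_k$, $\rho(X_2)v_k$, and $v_k$ is slightly cleaner than the paper's brute-force simplification and makes explicit why $\beta_k^+$ is defined the way it is, but the underlying computation is identical.
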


\begin{proof} Given  a vector $v_k \in V_k$, we  want to show that $\rho(X_3) \mathcal{U}_k v_k = \lambda_{k+1} \mathcal{U}_k v_k$. As in the proof of Lemma~\ref{lem:UpDown}, we expand \begin{align*}
\rho(X_3) \mathcal{U}_k v_k & =A^2 \rho(X_3 X_1)v_k - x_3 A^{4k} \rho(X_3 X_2) v_k +\beta^+_k \rho(X_3) v_k   \\
& = A^{-2} \rho(X_1)\rho(X_3)v_k + (A^4-A^{-4})\rho(X_2)v_k + (A^2 - A^{-2}) q_2v_k \\
&\qquad \qquad -  x_3 A^{4k+4} \rho(X_2)\rho(X_3)v_k + x_3 A^{4k+2} (A^4 - A^{-4}) \rho(X_1)v_k \\
&\qquad \qquad + x_3 A^{4k+2}(A^2 - A^{-2}) q_1v_k + \beta^+_k \rho(X_3) v_k
\end{align*}
using the relations of Proposition~\ref{prop:PresentationFourPuncSphere}.

Since $\rho(X_3) v_k = \lambda_k v_k$ and $\lambda_k=(x_3 A^{4k}+ x_3^{-1} A^{-4k})v_k$, it follows that
\begin{align*}
\rho(X_3) \mathcal{U}_k v_k & = A^{-2} \rho(X_1)\lambda_kv_k + (A^4-A^{-4})\rho(X_2) v_k+ (A^2 - A^{-2}) q_2v_k \\
&\qquad \qquad -  x_3 A^{4k+4} \rho(X_2)\lambda_kv_k + x_3 A^{4k+2} (A^4 - A^{-4}) \rho(X_1)v_k \\
&\qquad \qquad + x_3 A^{4k+2}(A^2 - A^{-2}) q_1v_k + \beta^+_k \lambda_k v_k \\
& =  (x_3 A^{4k+6} + x_3^{-1} A^{-4k-2}) \rho(X_1)v_k  - (x_3^2 A^{8k+4}+A^{-4}) \rho(X_2)v_k \\
&\qquad \qquad + \beta^+_k (x_3A^{4k+4} + x_3^{-1}A^{-4k-4} )v_k \\
&  = \lambda_{k+1} \bigl(A^2 \rho(X_1)-x_3 A^{4k}\rho(X_2)+\beta^+_k\bigr) v_k = \lambda_{k+1} \mathcal{U}_k v_k
\end{align*}
which shows that $\mathcal{U}_k v_k$ belongs to $V_{k+1}$. 

The proof that $\mathcal{D}_k$ sends $V_k$ to $V_{k-1}$ is very  similar. 
\end{proof}

We now have the equivalent of Lemma~\ref{lem:ComposeUpAndDown}. 
\begin{lem}
\label{lem:ComposeUpDownFourPuncSphere}
 For every $v_k \in V_k$, 
$$
\mathcal{D}_{k+1} \mathcal{U}_k v_k = R_k v_k
$$
where
$$
R_k=- (\Delta -2 + x_3^2 A^{8k+4}+ x_3^{-2} A^{-8k-4}+(x_3 A^{4k+2}+x_3^{-1} A^{-4k-2})q_3 - \beta^-_{k+1} \beta^+_k ) 
$$
\end{lem}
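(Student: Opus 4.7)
The plan is to follow the same strategy as in the proof of Lemma~\ref{lem:ComposeUpAndDown}, but using the more elaborate central relation of Proposition~\ref{prop:PresentationFourPuncSphere}(5) in place of the puncture element formula, and with two extra scalar correction terms $\beta^\pm$ engineered to absorb the new $q_1 X_1$ and $q_2 X_2$ contributions.

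First I would expand $\mathcal{D}_{k+1}\mathcal{U}_k$ directly from the definitions. Noting that $(-x_3^{-1} A^{-4k-4})(-x_3 A^{4k}) = A^{-4}$, the result has the form
\begin{align*}
\mathcal{D}_{k+1}\mathcal{U}_k v_k & = A^4\rho(X_1^2) v_k + A^{-4}\rho(X_2^2)v_k - x_3 A^{4k+2}\rho(X_1X_2)v_k - x_3^{-1}A^{-4k-2}\rho(X_2X_1)v_k\\
& \qquad + A^2(\beta_k^+ + \beta_{k+1}^-)\rho(X_1)v_k - \bigl(x_3^{-1}A^{-4k-4}\beta_k^+ + x_3 A^{4k}\beta_{k+1}^-\bigr)\rho(X_2)v_k + \beta_{k+1}^-\beta_k^+ v_k.
\end{align*}

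Next I would apply $\rho$ to the central relation (5) of Proposition~\ref{prop:PresentationFourPuncSphere} to substitute for $A^4\rho(X_1^2) + A^{-4}\rho(X_2^2)$, producing terms in $\rho(X_1X_2X_3)$, $\rho(X_3^2)$, $\rho(X_1)$, $\rho(X_2)$, $\rho(X_3)$ and a constant $(A^2+A^{-2})^2 - \Delta$. Then I would use relation (2) in Proposition~\ref{prop:PresentationFourPuncSphere} to replace $\rho(X_2X_1)$ by $A^4\rho(X_1X_2) - A^2(A^4-A^{-4})\rho(X_3) - A^2(A^2-A^{-2})q_3\id$. After that, I would apply everything to $v_k$, using $\rho(X_3)v_k = \lambda_k v_k$ with $\lambda_k = x_3 A^{4k} + x_3^{-1}A^{-4k}$, and $\rho(X_1X_2X_3)v_k = \lambda_k\rho(X_1X_2)v_k$. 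The coefficient of $\rho(X_1X_2)v_k$ then becomes $A^2\lambda_k - x_3 A^{4k+2} - x_3^{-1}A^{-4k+2}$, which vanishes identically, just as in Lemma~\ref{lem:ComposeUpAndDown}.

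The key verification, and the main bookkeeping obstacle, is that the remaining coefficients of $\rho(X_1)v_k$ and $\rho(X_2)v_k$ also vanish. The $\rho(X_1)v_k$ coefficient collects to $A^2(\beta_k^+ + \beta_{k+1}^- - q_1)$, and a direct calculation from the definitions yields
$$
\beta_k^+ + \beta_{k+1}^- = \frac{(q_2 + x_3 A^{4k+2}q_1) + (-q_2 - x_3^{-1}A^{-4k-2}q_1)}{x_3 A^{4k+2} - x_3^{-1}A^{-4k-2}} = q_1,
$$
so this coefficient is $0$. Similarly, the coefficient of $\rho(X_2)v_k$ collects to $-A^{-2}q_2 - x_3^{-1}A^{-4k-4}\beta_k^+ - x_3 A^{4k}\beta_{k+1}^-$, and after combining the two fractions the numerator factors as $-q_2 \cdot A^{-2}(x_3 A^{4k+2} - x_3^{-1}A^{-4k-2})$, cancelling the denominator and producing exactly $-A^{-2}q_2$, so the total vanishes. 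These two cancellations are precisely what the choice of $\beta_k^\pm$ is engineered for.

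It then remains to collect the scalar contributions. From $-A^4\rho(X_3^2)v_k = -A^4\lambda_k^2 v_k$ one extracts $-x_3^2 A^{8k+4} - 2A^4 - x_3^{-2}A^{-8k+4}$; combining with $(A^2+A^{-2})^2$ and the contributions $(A^4 - A^{-4}) + x_3^{-2}(A^{-8k+4} - A^{-8k-4})$ coming from the commutation substitution for $\rho(X_3)$ gives $2 - x_3^2 A^{8k+4} - x_3^{-2}A^{-8k-4}$. The $q_3$ contributions assemble to $-q_3(x_3 A^{4k+2} + x_3^{-1}A^{-4k-2})$, and one is left with $-\Delta + \beta_{k+1}^-\beta_k^+$. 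Adding everything yields exactly $R_k v_k$ as in the statement. The only real difficulty is the clerical care needed to keep track of the many $A$-powers; every cancellation is forced by the structure and the definition of $\beta_k^\pm$.
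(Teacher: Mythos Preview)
Your proof is correct and follows essentially the same strategy as the paper's: expand the product, use relation~(5) of Proposition~\ref{prop:PresentationFourPuncSphere} to eliminate the $X_1^2$ and $X_2^2$ terms, use relation~(2) to rewrite $X_2X_1$, verify that the coefficients of $\rho(X_1X_2)v_k$, $\rho(X_1)v_k$ and $\rho(X_2)v_k$ all vanish thanks to the defining identities for $\beta_k^\pm$, and then collect the remaining scalar terms. One small slip: in your $\rho(X_2)$ cancellation the two fractions actually combine to $+A^{-2}q_2$ (not $-A^{-2}q_2$), which is precisely what is needed to cancel the $-A^{-2}q_2$ coming from relation~(5).
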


\begin{proof}
We begin by expanding
\begin{align*}
\mathcal{D}_{k+1} \mathcal{U}_k v_k & = (A^2 \rho(X_1) - x_3^{-1} A^{-4k-4} \rho(X_2)+\beta^-_{k+1}\Id_V)(A^2 \rho(X_1) - x_3 A^{4k} \rho(X_2)+\beta^+_k\Id_V)v_k \\
& = A^4 \rho(X_1^2)v_k - x_3^{-1} A^{-4k-2} \rho(X_2 X_1) v_k +A^2 \beta^-_{k+1} \rho(X_1)v_k\\
& \qquad\qquad - x_3 A^{4k+2} \rho(X_1 X_2)v_k + A^{-4} \rho(X_2^2)v_k - x_3 A^{4k} \beta^-_{k+1}\rho(X_2)v+k\\
& \qquad\qquad +A^2 \beta^+_{k} \rho(X_1)v_k - x_3^{-1} A^{-4k-4} \beta^+_{k}\rho(X_2)v_k + \beta^-_{k+1} \beta^+_kv_k
\end{align*}
Using Relation~(5) of Proposition~\ref{prop:PresentationFourPuncSphere}, 
\begin{align*}
\mathcal{D}_{k+1} \mathcal{U}_k v_k + \Delta v_k
& = A^4 \rho(X_1^2)v_k - x_3^{-1} A^{-4k-2} \rho(X_2 X_1) v_k +A^2 \beta^-_{k+1} \rho(X_1) v_k\\
& \qquad\qquad - x_3 A^{4k+2} \rho(X_1 X_2)v_k + A^{-4} \rho(X_2^2) v_k- x_3 A^{4k} \beta^-_{k+1}\rho(X_2)v_k\\
& \qquad\qquad +A^2 \beta^+_{k} \rho(X_1) v_k - x_3^{-1} A^{-4k-4} \beta^+_{k}\rho(X_2)v_k + \beta^-_{k+1} \beta^+_k v_k \\
& \qquad\qquad +A^2 \rho( X_1 X_2 X_3)v_k - A^4 \rho(X_1^2)v_k - A^{-4}\rho(X_2^2) - A^4 \rho(X_3^2)v_k\\
& \qquad\qquad  - A^2 q_1 \rho(X_1) v_k -A^{-2}q_2\rho(X_2) - A^2 q_3 \rho(X_3)v_k + (A^2 + A^{-2})^2 v_k\\
& =  - x_3^{-1} A^{-4k-2} \rho(X_2 X_1) v_k +A^2 \beta^-_{k+1} \rho(X_1) v_k - x_3 A^{4k+2} \rho(X_1 X_2) v_k \\
& \qquad\qquad - x_3 A^{4k} \beta^-_{k+1}\rho(X_2) v_k +A^2 \beta^+_{k} \rho(X_1) v_k - x_3^{-1} A^{-4k-4} \beta^+_{k}\rho(X_2) v_k \\
& \qquad\qquad + \beta^-_{k+1} \beta^+_k v_k +A^2 \rho( X_1 X_2 X_3)v_k  - A^4 \rho(X_3^2) - A^2 q_1 \rho(X_1)v_k \\
& \qquad\qquad-A^{-2}q_2\rho(X_2)v_k - A^2 q_3 \rho(X_3)v_k + (A^2 + A^{-2})^2 v_k.
\end{align*}

We simplify this complicated expression in a few steps. We first observe that all  terms with $\rho(X_1)$ and $\rho(X_2)$ cancel each other out. Indeed,
$$
A^2 \beta^-_{k+1} \rho(X_1)v_k +A^2 \beta^+_{k} \rho(X_1) v_k -  A^2 q_1 \rho(X_1)v_k = A^2(\beta^-_{k+1} + \beta^+_{k} - q_1) \rho(X_1)v_k = 0
$$
and
\begin{align*}
- x_3 A^{4k} \beta^-_{k+1}\rho(X_2)v_k&- x_3^{-1} A^{-4k-4} \beta^+_{k}\rho(X_2)v_k -A^{-2}q_2\rho(X_2)v_k \\
&= -A^{-2}(x_3 A^{4k+2} \beta^-_{k+1}+x_3^{-1} A^{-4k-2} \beta^+_{k}+q_2)\rho(X_2)v_k = 0,
\end{align*}
so that we are left with
\begin{align*}
\mathcal{D}_{k+1} \mathcal{U}_k v_k + \Delta v_k & =   - x_3^{-1} A^{-4k-2} \rho(X_2 X_1)v_k - x_3 A^{4k+2} \rho(X_1 X_2)v_k + \beta^-_{k+1} \beta^+_k v_k\\
& \qquad +A^2 \rho( X_1 X_2 X_3) v_k - A^4 \rho(X_3^2) v_k  - A^2 q_3 \rho(X_3)v_k + (A^2 + A^{-2})^2 v_k .
\end{align*}

We now use the fact that $v_k$ is an eigenvalue of $\rho(X_3)$:
$$
A^2 \rho( X_1 X_2 X_3) v_k = A^2 \rho( X_1 X_2) \lambda_k v_k = A^2 (x_3 A^{4k}+ x_3^{-1} A^{-4k}) \rho( X_1 X_2) v_k.
$$
Therefore
\begin{align*}
\mathcal{D}_{k+1} \mathcal{U}_k v_k + \Delta v_k 
& =  - x_3^{-1} A^{-4k-2} \rho(X_2 X_1)v_k + x_3^{-1} A^{-4k+2} \rho( X_1 X_2) v_k + \beta^-_{k+1} \beta^+_k v_k \\
& \qquad\qquad  - A^4 \rho(X_3^2) v_k  - A^2 q_3 \rho(X_3) v_k+ (A^2 + A^{-2})^2 v_k \\
 & = x_3^{-1} A^{-4k} (A^4-A^{-4})\rho(X_{3} )v_k +(A^2-A^{-2})q_3 v_k + \beta^-_{k+1} \beta^+_k v_k\\
& \qquad\qquad - A^4 \rho(X_3^2)v_k - A^2 q_3 \rho(X_3)v_k + (A^2 + A^{-2})^2 v_k 
\end{align*}
after using Relation~(1) of Proposition~\ref{prop:PresentationFourPuncSphere}. 
The formula of Lemma~\ref{lem:ComposeUpDownFourPuncSphere} then follows from a last application of the property that $\rho(X_3)v_k= (x_3 A^{4k} + x_3^{-1} A^{-4k})v_k$. 
\end{proof}

We now turn to the analogue of Lemma~\ref{lem:ProductUpsAndDowns}, and consider the product 
$$
\prod_{j=1}^N\mathcal{D}_{k+j} \prod_{j=1}^N\mathcal{U}_{k+N-j} =  \mathcal D_{k+1}\mathcal D_{k+2} \dots \mathcal D_{k+N-1}\mathcal D_{k+N} \mathcal U_{k+N-1}\mathcal U_{k+N-2} \dots \mathcal U_{k+1}\mathcal U_{k}.
$$
Since the output of Lemma~\ref{lem:ComposeUpDownFourPuncSphere} is more complicated than that of Lemma~\ref{lem:ComposeUpAndDown}, the computation will be more elaborate. We begin with a simple step.

\begin{lem}
\label{lem:ProductUpsAndDownsPuncSphere1}
For every $v_k \in V_k$, 
$$
\prod_{j=1}^N \mathcal{D}_{k+j} \prod_{j=1}^N \mathcal{U}_{k+l-j}\, v_k =  \prod_{j=1}^ N R_{j} \,v_k
$$
where $R_j$ is as in Lemma~{\upshape \ref{lem:ComposeUpDownFourPuncSphere}}. 
\end{lem}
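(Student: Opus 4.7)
The plan is to prove this by unfolding the composition from the inside out and applying Lemma~\ref{lem:ComposeUpDownFourPuncSphere} at each step. Set $w_k = v_k$ and, for $j = 1, 2, \dots, N-1$, define $w_{k+j} = \mathcal{U}_{k+j-1} \mathcal{U}_{k+j-2} \cdots \mathcal{U}_{k+1} \mathcal{U}_k v_k$. By repeated application of Lemma~\ref{lem:UpDownFourPunctSphere}, each $w_{k+j}$ lies in $V_{k+j}$. Setting $w_{k+N} = \mathcal{U}_{k+N-1} w_{k+N-1}$, this element lies in $V_{k+N} = V_k$.

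The innermost nontrivial composition is $\mathcal{D}_{k+N} \mathcal{U}_{k+N-1}$ acting on $w_{k+N-1} \in V_{k+N-1}$; by Lemma~\ref{lem:ComposeUpDownFourPuncSphere} this equals $R_{k+N-1} w_{k+N-1}$. Since $R_{k+N-1}$ is a scalar it commutes with all subsequent operators, and one can continue peeling off pairs $\mathcal{D}_{k+j+1} \mathcal{U}_{k+j}$ from the innermost position outward. By a straightforward induction on the number of peeled-off pairs, one obtains
\begin{align*}
\prod_{j=1}^N \mathcal{D}_{k+j} \prod_{j=1}^N \mathcal{U}_{k+N-j}\, v_k \;=\; \prod_{j=0}^{N-1} R_{k+j} \, v_k.
\end{align*}

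The only remaining point is to identify $\prod_{j=0}^{N-1} R_{k+j}$ with $\prod_{j=1}^{N} R_j$. For this, observe from the formula
\[
R_k = -\bigl(\Delta - 2 + x_3^2 A^{8k+4} + x_3^{-2} A^{-8k-4} + (x_3 A^{4k+2} + x_3^{-1} A^{-4k-2}) q_3 - \beta^-_{k+1}\beta^+_k\bigr)
\]
together with the defining expressions for $\beta^\pm_k$, that $R_k$ depends on $k$ only through the quantities $A^{4k}$ and $A^{8k}$ (and $A^{4(k+1)}$ entering via $\beta^-_{k+1}$). Since $A$ is a primitive $N$-root of $-1$ with $N$ odd, we have $A^{2N}=1$, so $A^{4N}=A^{8N}=1$; hence $R_{k+N} = R_k$ for every integer $k$. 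Therefore the $N$ consecutive scalars $R_k, R_{k+1}, \dots, R_{k+N-1}$ are, as a multiset, the same as $R_1, R_2, \dots, R_N$, and the two products agree.

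This argument is almost entirely bookkeeping; the main technical point to check carefully is the $N$-periodicity of $R_j$ in the index $j$, which is where the oddness of $N$ and the fact that $A^2$ is a primitive $N$-th root of unity are implicitly used.
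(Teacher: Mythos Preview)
Your proof is correct and follows the same approach as the paper. The paper's proof is a single sentence deferring to the analogous argument for Lemma~\ref{lem:ProductUpsAndDowns}; you have simply spelled out that argument in full, including the $N$-periodicity of $R_j$ in $j$ that justifies replacing $\prod_{j=0}^{N-1} R_{k+j}$ by $\prod_{j=1}^{N} R_j$, a point the paper leaves implicit.
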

\begin{proof}
As in the proof of Lemma~\ref{lem:ProductUpsAndDowns}, this follows from Lemma~\ref{lem:ComposeUpDownFourPuncSphere}. 
\end{proof}

We next tackle the product of Lemma~\ref{lem:ProductUpsAndDownsPuncSphere1}. 
\begin{lem}
\label{lem:ProductUpsAndDownsPuncSphere2}
$$
\prod_{j=1}^{N} R_j =   -\frac{ \bigl( t_3  - T_N( r_0)  \bigr) \bigl( t_3  - T_N( r_1)  \bigr)  \bigl( t_3  - T_N( r_2)  \bigr)   \bigl( t_3  - T_N( r_3)  \bigr)  }{ t_3^2-4}
$$
where $r_0$, $r_1$, $r_2$ and $r_3$ are the solutions of the  equation 
$$
\bigl(r^2 +  p_0 p_3 r + p_0^2 + p_3^2-4 \bigr)\bigl(r^2 +  p_1 p_2 r + p_1^2 + p_2^2-4 \bigr) =0.
$$
\end{lem}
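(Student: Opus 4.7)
The plan is to reduce the identity to a clean rational expression in the single variable $s_k = x_3 A^{4k+2} + x_3^{-1} A^{-4k-2}$, and then evaluate the resulting product using the Chebyshev factorization of Lemma~\ref{lem:chebyshev}(b).

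First I would substitute the explicit definitions of $\beta_k^+$ and $\beta_{k+1}^-$ into the formula for $R_k$ from Lemma~\ref{lem:ComposeUpDownFourPuncSphere}. Setting $y = x_3 A^{4k+2}$ and $s = s_k = y + y^{-1}$, a direct calculation gives
$$
\beta_{k+1}^- \beta_k^+ = -\frac{q_1^2 + q_1 q_2 s + q_2^2}{s^2 - 4},
$$
and the other terms of $R_k$ depend on $k$ only through $s$ and $s^2 - 2 = y^2 + y^{-2}$. Clearing denominators, I would obtain
$$
R_k = -\frac{(\Delta + s^2 + q_3 s - 4)(s^2 - 4) + q_1^2 + q_1 q_2 s + q_2^2}{s^2-4}.
$$

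Next I would check the key algebraic identity, which is that the quartic polynomial in the numerator factors as
$$
(s^2 + p_0 p_3\, s + p_0^2 + p_3^2 - 4)(s^2 + p_1 p_2\, s + p_1^2 + p_2^2 - 4).
$$
Expanding the right-hand side and matching coefficients against the definitions $q_1 = p_0 p_1 + p_2 p_3$, $q_2 = p_0 p_2 + p_1 p_3$, $q_3 = p_0 p_3 + p_1 p_2$ and $\Delta = p_0 p_1 p_2 p_3 + \sum p_i^2$ is a bookkeeping exercise: the $s^3$--, $s^2$--, $s^1$--, $s^0$-- coefficients reduce to $q_3$, $\Delta - 8$, $q_1 q_2 - 4 q_3$, and $16 - 4\Delta + q_1^2 + q_2^2$ respectively, as required. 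Since $r_0, r_1, r_2, r_3$ are by definition the roots of this product of quadratics, this gives $R_k = -\prod_{i=0}^{3}(s_k - r_i)/(s_k^2 - 4)$.

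Taking the product over $k = 1, \dots, N$ then separates completely:
$$
\prod_{k=1}^{N} R_k = (-1)^N \frac{\prod_{i=0}^{3}\prod_{k=1}^{N}(s_k - r_i)}{\prod_{k=1}^{N}(s_k^2 - 4)}.
$$
Here I would use that, because $N$ is odd and $A^2$ is a primitive $N$-th root of unity, the map $k \mapsto 4k+2 \pmod{2N}$ is a bijection onto the exponents $\{2, 4, \dots, 2N\} \pmod{2N}$, so that $\{A^{4k+2}\}_{k=1}^N = \{A^{2k}\}_{k=1}^N$. Lemma~\ref{lem:chebyshev}(b), applied with $x = r_i$ and $a = x_3$, then yields $\prod_{k=1}^{N}(s_k - r_i) = -(T_N(r_i) - t_3) = t_3 - T_N(r_i)$. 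The same lemma applied with $r = \pm 2$ (noting $T_N(\pm 2) = \pm 2$ since $N$ is odd) gives $\prod_{k=1}^N(s_k^2 - 4) = (t_3 - 2)(t_3 + 2) = t_3^2 - 4$. Combining these and using $(-1)^N = -1$ produces exactly the claimed formula.

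The main obstacle is the polynomial identity in the second paragraph: the four apparent equalities of coefficients between $(\Delta + s^2 + q_3 s - 4)(s^2-4) + q_1^2 + q_1 q_2 s + q_2^2$ and the product $(s^2 + p_0 p_3 s + p_0^2 + p_3^2 - 4)(s^2 + p_1 p_2 s + p_1^2 + p_2^2 - 4)$ are non-obvious identities in the $p_i$, and verifying them is the only place where the specific pairing $\{0,3\}\cup\{1,2\}$ of punctures (reflecting the isotopy class of $X_3$) enters. Everything else is symbolic manipulation and bookkeeping with roots of unity.
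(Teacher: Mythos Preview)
Your proof is correct and follows essentially the same route as the paper: both rewrite $R_k$ in terms of the variable $s_k = x_3 A^{4k+2} + x_3^{-1}A^{-4k-2}$, obtain the factorization $R_k = -(s_k^2 + p_0p_3 s_k + p_0^2+p_3^2-4)(s_k^2 + p_1p_2 s_k + p_1^2+p_2^2-4)/(s_k^2-4)$, and then evaluate the product over $k$ via Lemma~\ref{lem:chebyshev}(b). You give a bit more of the intermediate algebra (the expression before factoring and the coefficient-matching), whereas the paper simply asserts the factorization; but the argument is the same.
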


\begin{proof} After substituting back the values of $\beta_j^+ = \frac{q_2+x_3 A^{4j+2}q_1}{x_3 A^{4j+2}-x_3^{-1} A^{-4j-2}}$, $\beta_j^- = \frac{-q_2-x_3^{-1} A^{-4j+2}q_1}{x_3 A^{4j-2}-x_3^{-1} A^{-4j+2}}$ and  $\Delta = p_0 p_1 p_2 p_3 +p_0^2+p_1^2+ p_2^2+p_3^2$ and expanding, 
$$
R_j=- (\Delta -2 + x_3^2 A^{8j+4}+ x_3^{-2} A^{-8j-4}+(x_3 A^{4j+2}+x_3^{-1} A^{-4j-2})q_3 - \beta^-_{j+1} \beta^+_j ) 
$$
can be factored as $R_j = -S_jS_j'/S_j''$ where
\begin{align*}
S_j & = \left( x_3^{-2} A^{-8 j-4} + p_0 p_3 x_3^{-1} A^{-4 j-2}+ \left(p_0^2 + p_3^2-2 \right)+p_0 p_3 x_3 A^{4 j+2}+ x_3^2 A^{8 j+4} \right) \\
S_j'& = \left( x_3^{-2} A^{-8 j-4} + p_1 p_2 x_3^{-1} A^{-4 j-2}+ \left(p_1^2 + p_2^2-2 \right)+p_1 p_2 x_3 A^{4 j+2}+ x_3^2 A^{8 j+4} \right)\\
\text{and } S_j''& =   \left(x_3 A^{4 j+2}-x_3^{-1}A^{-4 j-2}\right)^2. 
\end{align*}

The term $S_j$ looks nicer in terms of $u=x_3 A^{4 j+2}$. Then,
\begin{align*}
S_j &= u^{-2} + u^2 +p_0 p_3 u + p_0 p_3 u^{-1}+ p_0^2 + p_3^2-2  \\
 & =   (u+u^{-1})^2 + p_0 p_3 (u+u^{-1}) + p_0^2 + p_3^2-4 \\
 & = (u  + u^{-1}- r_0) (u  + u^{-1} - r_3)\\
 &= (r_0 - x_3 A^{4 j+2} - x_3^{-1} A^{-4 j-2}) (r_3 - x_3 A^{4 j+2} - x_3^{-1} A^{-4 j-2})
\end{align*}
where $r_0$ and $r_3$ are the solutions of the equation $r^2 +  p_0 p_3 r + p_0^2 + p_3^2-4 =0 $. Note that $r_0$ and $r_3$ do not depend on $j$. Lemma~\ref{lem:chebyshev}(b) then shows that
$$
\prod_{j-1}^N  S_j = \bigl( T_N(r_0) - x_3^N - x_3^{-N} \bigr)\bigl( T_N(r_3) - x_3^N - x_3^{-N} \bigr) 
= \bigl( T_N(r_0) - t_3\bigr)\bigl( T_N(r_3) -t_3 \bigr) .
$$

Similarly,
$$
\prod_{j-1}^N  S_j'
= \bigl( T_N(r_1) - t_3\bigr)\bigl( T_N(r_2) -t_3 \bigr) .
$$
where $r_1$ and $r_2$ are the solutions of the equation $r^2 +  p_1 p_2 r + p_1^2 + p_2^2-4 =0 $.

Finally
$$
\prod_{j-1}^N  S_j'' = \prod_{j-1}^N \left(x_3 A^{4 j+2}-x_3^{-1}A^{-4 j-2}\right)^2 = \left(x_3^N - x_3^{-N}\right)^2 = t_3^2-4,
$$
which concludes the computation. 
\end{proof}

\begin{lem}
\label{lem:NdimensionalPuncSphere}
The space $V$ has dimension $N$, and all eigenspaces $V_k$ of $\rho(X_3)$ are $1$-dimensional. 

More precisely, $V$ admits a basis $\{ v_1, v_2, \dots, v_N\}$ where each $v_k$ generates the eigenspace $V_k$, and where for some $u\neq 0$
$$
\mathcal U_k v_k =
\begin{cases}
v_{k+1} &\text{ if } 1\leq k \leq N-1\\
u v_1 &\text{ if } k=N
\end{cases}
$$
and
$$
\mathcal D_k v_k =
\begin{cases}
\frac1u R_N  v_N &\text{ if } k=1\\
R_{k-1}v_{k-1} &\text{ if } 2\leq k \leq N.
\end{cases}
$$
where $R_k$ is defined as in Lemma~{\upshape\ref{lem:ComposeUpDownFourPuncSphere}}. 
\end{lem}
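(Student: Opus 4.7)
The plan is to adapt the proof of Lemma \ref{lem:Ndimensional} verbatim, replacing its scalar $-(p + x_3^2 A^{4k+2} + x_3^{-2} A^{-4k-2})$ from Lemma \ref{lem:ComposeUpAndDown} with the more elaborate scalar $R_k$ from Lemma \ref{lem:ComposeUpDownFourPuncSphere}. The mechanism is unchanged: the ``up'' and ``down'' operators cyclically permute the eigenspaces $V_k$ of $\rho(X_3)$ in opposite directions (Lemma \ref{lem:UpDownFourPunctSphere}), so the full cycle $\mathcal D_{k+N}\cdots \mathcal D_{k+1}\mathcal U_{k+N-1}\cdots \mathcal U_k$ acts on $V_k$ as scalar multiplication by $\prod_{j=1}^N R_j$ (Lemma \ref{lem:ProductUpsAndDownsPuncSphere1}).

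Since $\rho(X_3)$ has at least one non-zero eigenvector, some $V_{k_0}$ is non-trivial. By Lemma \ref{lem:ProductUpsAndDownsPuncSphere2}, the scalar $\prod_{j=1}^N R_j$ is non-zero precisely under the hypotheses of Theorem \ref{thm:ReconstructPuncSphere} ($t_3 \neq \pm 2$ handles the denominator $t_3^2-4$, and the condition $\Tr r(X_3) \neq T_N(r)$ on the roots of the quartic handles the numerator). Therefore the composition $\mathcal U_{k_0+N-1}\cdots \mathcal U_{k_0}\colon V_{k_0} \to V_{k_0}$ cannot be the zero map, and admits a non-zero eigenvalue $u$ with a non-zero eigenvector $v_{k_0} \in V_{k_0}$. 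After cyclically reindexing by replacing $x_3$ with $x_3 A^{4l}$ for appropriate $l$, we may assume $k_0 = 1$.

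Setting $v_k = \mathcal U_{k-1}\cdots \mathcal U_1 v_1$ for $k = 2, \ldots, N$, we obtain $v_k \in V_k$, $\mathcal U_k v_k = v_{k+1}$ for $1 \leq k \leq N-1$, and $\mathcal U_N v_N = u v_1$ by construction. Since the full cyclic composition returns $u v_1 \neq 0$, each intermediate $v_k$ is non-zero. The formulas for $\mathcal D_k v_k$ follow from Lemma \ref{lem:ComposeUpDownFourPuncSphere}: for $2 \leq k \leq N$ we have $\mathcal D_k v_k = \mathcal D_k \mathcal U_{k-1} v_{k-1} = R_{k-1} v_{k-1}$, while applying $\mathcal D_1$ to both sides of $\mathcal U_N v_N = u v_1$ and using $\mathcal D_1 \mathcal U_N v_N = R_N v_N$ yields $\mathcal D_1 v_1 = \frac{1}{u} R_N v_N$.

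Finally, let $W = \mathrm{span}(v_1, \ldots, v_N)$. The vectors $v_k$ lie in the distinct eigenspaces $V_k$ of $\rho(X_3)$ and so are linearly independent, giving $\dim W = N$. Lemma \ref{lem:ExpressRhoUpDown4PunctSPhere} expresses $\rho(X_1) v_k$, $\rho(X_2) v_k$ and $\rho(X_3) v_k$ as linear combinations of $\mathcal U_k v_k$, $\mathcal D_k v_k$ and $v_k$, all of which lie in $W$ by the formulas above; the operators $\rho(P_i)$ act as scalars and trivially preserve $W$. Since these exhaust the generators of $\SSS(S_{0,4})$ listed in Proposition \ref{prop:PresentationFourPuncSphere}, irreducibility of $\rho$ forces $W = V$. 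In particular $\dim V = N$, and each $V_k$ is the line spanned by $v_k$. The main point requiring care is really just tracking the non-vanishing of $\prod R_j$ against the hypotheses of Theorem \ref{thm:ReconstructPuncSphere}, which is precisely the content of Lemma \ref{lem:ProductUpsAndDownsPuncSphere2}; beyond that, the argument is structurally identical to the one-puncture torus case.
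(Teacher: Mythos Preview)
Your proposal is correct and follows essentially the same approach as the paper, which simply notes that $\prod_{j=1}^N R_j \neq 0$ by Lemmas~\ref{lem:ProductUpsAndDownsPuncSphere1}--\ref{lem:ProductUpsAndDownsPuncSphere2} together with the hypotheses on $t_3$, and then declares the proof identical to that of Lemma~\ref{lem:Ndimensional}. Your write-up spells this out in full, and even adds the small but pertinent observation that the central generators $P_i$ act as scalars and hence preserve $W$, which is needed here (unlike in the torus case) since the $P_i$ are independent generators in Proposition~\ref{prop:PresentationFourPuncSphere}.
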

\begin{proof}
By the combination of Lemmas~\ref{lem:ProductUpsAndDownsPuncSphere1} and \ref{lem:ProductUpsAndDownsPuncSphere2}, and by hypothesis on $t_3$, the product $\prod_{j=1}^N \mathcal{D}_{k+j} \prod_{j=1}^N \mathcal{U}_{k+l-j}$ is different from 0. The proof is then identical to that of Lemma~\ref{lem:Ndimensional}. 
\end{proof}

\begin{lem}
\label{lem:ComputeUPuncSphere}
The number $u$ occurring in Lemma~{\upshape \ref{lem:NdimensionalPuncSphere}} is completely determined by $x_3$, $t_1$, $t_2$ and the puncture invariants $p_i$. 
\end{lem}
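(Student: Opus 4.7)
The plan is to mimic Lemma~\ref{lem:ComputeU} from the one-puncture torus case: extract a scalar equation in $u$ and $u^{-1}$ from the identity $T_N\bigl(\rho(X_1)\bigr)=t_1\Id_V$, and then resolve the remaining ambiguity by combining it with the analogous equation obtained from $T_N\bigl(\rho(X_2)\bigr)=t_2\Id_V$.

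Using Lemma~\ref{lem:ExpressRhoUpDown4PunctSPhere}, I would decompose $\rho(X_1)=\mathcal{U}+\mathcal{D}+\mathcal{E}$, where $\mathcal{U}v_k$ is a scalar multiple of $\mathcal{U}_kv_k\in V_{k+1}$, $\mathcal{D}v_k$ is a scalar multiple of $\mathcal{D}_kv_k\in V_{k-1}$, and $\mathcal{E}v_k$ is a scalar multiple of $v_k$. Expanding $T_N(\mathcal{U}+\mathcal{D}+\mathcal{E})$ as a linear combination of ordered monomials $A_1\cdots A_m$ with $A_i\in\{\mathcal{U},\mathcal{D},\mathcal{E}\}$, each such monomial shifts $V_k$ by $n_\mathcal{U}-n_\mathcal{D}$ indices. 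Since the result acts as $t_1\Id_V$, only monomials with $n_\mathcal{U}-n_\mathcal{D}\equiv 0\pmod N$ contribute. Because $T_N$ contains only odd-degree terms ($N$ odd) and $m\leq N$, the surviving monomials are either ``diagonal'' ($n_\mathcal{U}=n_\mathcal{D}$), the single word $\mathcal{U}^N$, or the single word $\mathcal{D}^N$.

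The key topological observation is that the $u$-dependence is carried only by $\mathcal{U}^N$ and $\mathcal{D}^N$. Indeed, by Lemma~\ref{lem:NdimensionalPuncSphere}, the scalar $u$ enters only through $\mathcal{U}_Nv_N=uv_1$ and $\mathcal{D}_1v_1=\frac{1}{u}R_Nv_N$, while all other applications of $\mathcal{U}_k$ and $\mathcal{D}_k$ to $v_k$ are $u$-free. Viewing a monomial applied to $v_k$ as a closed walk on $\mathbb{Z}/N$, the net exponent of $u$ it contributes equals the signed number of crossings of the cut $N\leftrightarrow 1$, which is exactly the winding number $(n_\mathcal{U}-n_\mathcal{D})/N$. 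Hence diagonal monomials yield $u$-free terms, $\mathcal{U}^N$ yields a factor of $u$, and $\mathcal{D}^N$ yields a factor of $u^{-1}$. As in the proof of Lemma~\ref{lem:ComputeU}, one then computes
\[
\mathcal{U}^Nv_k=\frac{u\,x_3^{-N}}{x_3^N-x_3^{-N}}\,v_k,
\qquad
\mathcal{D}^Nv_k=\frac{u^{-1}\,x_3^N}{x_3^N-x_3^{-N}}\,\prod_{j=1}^N R_j\cdot v_k,
\]
with $\prod_j R_j$ an explicit function of $x_3$, $A$, $t_3$ and the $p_i$ by Lemma~\ref{lem:ProductUpsAndDownsPuncSphere2}.

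Putting everything together, $T_N\bigl(\rho(X_1)\bigr)=t_1\Id_V$ becomes on each $V_k$ a scalar equation of the form $t_1=\alpha_1 u+\gamma_1 u^{-1}+\delta_1$, where $\alpha_1,\gamma_1,\delta_1$ depend only on $x_3$, $A$ and the puncture invariants $p_i$; multiplying by $u$ yields a quadratic in $u$ with at most two roots. Repeating the argument with $\rho(X_2)$ (which admits an analogous decomposition $\mathcal{U}'+\mathcal{D}'+\mathcal{E}'$ by Lemma~\ref{lem:ExpressRhoUpDown4PunctSPhere}) produces a second equation $t_2=\alpha_2 u+\gamma_2 u^{-1}+\delta_2$. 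Under the genericity hypotheses of Theorem~\ref{thm:ReconstructPuncSphere}, these two equations will admit a unique common root, so that $u$ is determined by $x_3$, $t_1$, $t_2$ and the $p_i$. The main obstacle is bookkeeping the diagonal contribution $\delta_1$: unlike in the one-puncture torus case, the presence of $\mathcal{E}$ produces many more surviving diagonal monomials, making the expansion considerably more intricate. Fortunately, a closed-form expression for $\delta_1$ is not required — only its $u$-independence and its dependence on $x_3$, $A$ and the $p_i$ are needed, together with a verification that the resulting pair of quadratics has a unique common solution.
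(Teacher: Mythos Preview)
Your approach is correct and matches the paper's almost step for step: decompose $\rho(X_1)$ and $\rho(X_2)$ into up, down, and diagonal pieces, use a crossing/winding argument to see that only $\mathcal{U}^N$ and $\mathcal{D}^N$ carry any net power of $u$, and thereby obtain two relations of the form $t_i=\alpha_i u+\gamma_i u^{-1}+\delta_i$ with $\alpha_i,\gamma_i,\delta_i$ determined by $x_3$ and the $p_j$. The only refinement is in the last step: rather than invoking a ``unique common root of two quadratics under genericity,'' the paper notes that $\gamma_1=x_3^N\gamma_2$ while $\alpha_1=x_3^{-N}\alpha_2$, so the combination $t_1-x_3^N t_2$ eliminates the $u^{-1}$ term and solves for $u$ linearly (and in your computation of $\mathcal{U}^N v_k$ there is a missing sign $(-1)^N=-1$).
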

\begin{proof}
Although we cannot be as specific as in the proof of Lemma~\ref{lem:Ndimensional}, we will follow a very similar argument. 

By Lemma~\ref{lem:ExpressRhoUpDown4PunctSPhere}, we can write $\rho(X_1)$ as a sum $\rho(X_1)= \mathcal U + \mathcal D + \mathcal I$ where, in the basis $\{ v_1, v_2, \dots, v_N \}$ of Lemma~\ref{lem:NdimensionalPuncSphere}, 
\begin{align*}
\mathcal U v_k & = 
-\frac{x_3^{-1} A^{-4k-2}}{x_3 A^{4k}-x_3^{-1} A^{-4k}}  \mathcal{U}_k v_k\\
\mathcal D v_k & = \frac{x_3 A^{4k-2}}{x_3 A^{4k}-x_3^{-1} A^{-4k}} \mathcal{D}_k v_k 
 \\
\mathcal I v_k&= \frac{ x_3^{-1} A^{-4k-2}\beta_k^+ - x_3 A^{4k-2} \beta_k^- }{x_3 A^{4k}-x_3^{-1} A^{-4k}} v_k  .
\end{align*}

The key observation is that, by the combination of Lemma~\ref{lem:NdimensionalPuncSphere} with the above definition,  $\mathcal U v_k = a_k v_{k+1}$ where $a_k$ depends only on $x_3$ if $1\leq k\leq N-1$, and where $a_N$ is $u$ times a quantity depending only on $x_3$. Similarly, $\mathcal D v_k = b_k v_{k-1}$ where $b_k$ depends only on $x_3$ and the puncture invariants $p_i$ if $2\leq k\leq N$, and where $b_1$ is $\frac1u$ times a quantity depending only on $x_3$ and the $p_i$. And $\mathcal I_k = c_k v_k$ where $c_k$ depends only on $x_3$ and the $p_i$. 
 
 If we expand $T_N\bigl( \rho(X_1) \bigr) = T_N(\mathcal U + \mathcal D + \mathcal I)$, we obtain a linear combination of terms $A_1A_2 \dots A_m$ with $m\leq N$, where each $A_i$ is equal to $\mathcal U$, $\mathcal D$ or $\mathcal I$. Since $T_N\bigl( \rho(X_1) \bigr) = t_1 \Id_V$ the only monomials that do not cancel out in this linear combination are those for which the line $A_1A_2 \dots A_{m-1}A_m V_1 $ is equal to $V_1$. 
 
 For most such monomials $A_1A_2 \dots A_m$ with a nontrivial contribution to $T_N\bigl( \rho(X_1) \bigr) $, the sequence of lines $V_1$, $A_m V_1$, $A_{m-1}A_m V_1$, \dots, $A_2 \dots A_{m-1}A_m V_1$, $A_1A_2 \dots A_{m-1}A_m V_1 =V_1$ switches as many times from $V_1$ to $V_N$ as it does from $V_N$ to $V_1$. This implies that, when we compute $A_1A_2 \dots A_m v_1$, any term $u$ is balanced by a term $\frac 1u$ and conversely, so that $A_1A_2 \dots A_m v_1$ is equal to $v_1$ times a scalar depending only on $x_3$ and the $p_i$. 
 
 Because $m\leq N$, there are exactly two exceptions to this property, namely $A_1A_2 \dots A_m = \mathcal U^N$ and $\mathcal D^N$.  These two exceptions occur with coefficient 1 in the expression of $T_N\bigl( \rho(X_1) \bigr)$ since the highest degree of $T_N(x)$ is $x^N$. Also, 
\begin{align*}
\mathcal U^N v_1 &= -u \prod_{k=1}^N \frac{x_3^{-1} A^{-4k-2}}{x_3 A^{4k}-x_3^{-1} A^{-4k}} \, v_1= -u \frac{x_3^{-N}}{x_3^N - x_3^{-N}} \,v_1\\
\mathcal D^N v_1 &=  u^{-1}  \prod_{k=1}^N \frac{x_3 A^{4k-2}}{x_3 A^{4k}-x_3^{-1} A^{-4k}} R_{k-1}\, v_1 = u^{-1} \frac{x_3^N}{x_3^N -x_3^{-N} } \prod_{k=1}^N  R_k\, v_1
\end{align*}
by Lemma~\ref{lem:NdimensionalPuncSphere}. 

Since $T_N\bigl( \rho(X_1) \bigr) = t_1 \Id_V$, the conclusion of this discussion is that
$$
t_1 =  -u \frac{x_3^{-N}}{x_3^N - x_3^{-N}} +  u^{-1} \frac{x_3^N}{x_3^N -x_3^{-N} } \prod_{k=1}^N  R_k + f(x_3, p_0, p_1, p_2, p_3)
$$
for an explicit function $f(x_3, p_0, p_1, p_2, p_3)$ of $x_3$ and of the puncture invariants $p_i$. 

The same argument applied to $T_N\bigl( \rho(X_2) \bigr) = t_2 \Id_V$ gives that
$$
t_2 =  -u \frac{1}{x_3^N - x_3^{-N}} +  u^{-1} \frac{1}{x_3^N -x_3^{-N} } \prod_{k=1}^N  R_k + g(x_3, p_0, p_1, p_2, p_3)
$$
for another function $g(x_3, p_0, p_1, p_2, p_3)$ of $x_3$ and of the puncture invariants $p_i$.

Then
\begin{equation*}
u = t_1 - x_3^N t_2 - f(x_3, p_0, p_1, p_2, p_3) + x_3^N g(x_3, p_0, p_1, p_2, p_3). \qedhere
\end{equation*}
\end{proof}

 The combination of Lemmas~\ref{lem:ExpressRhoUpDown4PunctSPhere}, \ref{lem:NdimensionalPuncSphere} and \ref{lem:ComputeUPuncSphere} then shows that, after isomorphism of $\rho$, the operators $\rho(X_1)$, $\rho(X_2)$ and $\rho(X_3)$ are completely determined by the $t_i$ and $p_j$. We are not able to give expressions as explicit as in Lemma~\ref{lem:PunctTorusExplicitFormula}, but this is enough to prove Theorem~\ref{thm:ReconstructPuncSphere}. \qed

\bibliographystyle{plain}
\bibliography{kauffman20150406}

\end{document}